\documentclass[final]{siamltex}
\usepackage{amssymb}
\usepackage{graphicx}
\usepackage[numbers,sort&compress]{natbib}
\usepackage{color}

\title{A Spectral Analysis of Subspace Enchanced Preconditioners}

\author{Tao Zhao\thanks{Department of Computer Science, University of Colorado Boulder, CO 80309, USA (tao.zhao@colorado.edu)}}

\begin{document}
\maketitle

\begin{abstract}
It is well-known that the convergence of Krylov subspace methods to solve linear system depends on the spectrum of the coefficient matrix, moreover, it is widely accepted that for both symmetric and unsymmetric systems Krylov subspace methods will converge fast if the spectrum of the coefficient matrix is clustered. In this paper we investigate the spectrum of the system preconditioned by the deflation, coarse correction and adapted deflation preconditioners. Our analysis shows that the spectrum of the preconditioned system is highly impacted by the angle between the coarse space for the construction of the three preconditioners and the subspace spanned by the eigenvectors associated with the small eigenvalues of the coefficient matrix. Furthermore, we prove that the accuracy of the inverse of projection matrix also impacts the spectrum of the preconditioned system. Numerical experiments emphasized the theoretical analysis.
\end{abstract}

\begin{keywords} spectrum, coarse space, deflation, preconditioner, perturbation analysis, projection matrix, iterative solvers, domain decomposition \end{keywords}


\pagestyle{myheadings}
\thispagestyle{plain}

\section{Introduction}
We consider the iterative solution of a linear system
$$Ax=b,$$
where $A\in\mathbb{R}^{n\times n}$ is symmetric and positive definite (SPD). It is well-known that the convergence of Krylov subspace methods for solving linear systems depend on the eigenvalue distribution of $A$. Recently, several studies \cite{NatafHuaVic, NatafMikolajZhao, TangNVE, deSturler, MorganDeflation, Padiy, ErlanggaSisc, ErlanggaSimax, NabbenVuik, Giraud, Gosselet} have shown that by removing the subspace spanned by the eigenvectors corresponding to several small eigenvalues from the Krylov search space makes the spectrum more clustered and consequently the convergence is improved. 

In this paper, we refer to matrix of the form
\begin{eqnarray}\label{de:projection}
E=Z^TAZ,~~Z\in\mathbb{R}^{n\times r}.
\end{eqnarray}
as a projection matrix, and the subspace spanned by the columns of $Z$ is referred to as a coarse space. In an ideal situation, the coarse space contains the vectors corresponding to the lower part of the spectrum that is responsible for the stagnation of Krylov subspace methods. As shown in this paper, the preconditioned systems will have the desired properties when the preconditioner is enchanced with the ideal coarse subspace. In contrast to the general coarse space, we refer to the coarse space spanned by the eigenvectors associated with several small eigenvalues of $A$ as the exact coarse space.

Next we briefly mention a few existing approaches that take the form of a standard precondition with an additive coarse space enchancement. In \cite{ErlanggaSimax,NabbenVuik}, the deflation preconditioner is defined by
\begin{eqnarray}\label{de:P_D}
P_D=I-AZE^{-1}Z^{T}.
\end{eqnarray}
Obviously $P_DA$ is singular since $P_D$ is singular. Fortunately, Krylov subspace methods converge for singular linear systems as long as they are consistent; furthermore, zero eigenvalues do not impact the convergence since the corresponding eigenvectors never enter the Krylov subspace \cite{NabbenVuik}. 

Instead of zero out the small eigenspace in the deflation method, the preconditioners based on coarse correction shift the small eigenvalues to the large ones. In \cite{TangNVE}, the coarse correction preconditioner in domain decomposition method is defined by
\begin{eqnarray}\label{de:P_C}
P_C=I+ZE^{-1}Z^{T}.
\end{eqnarray}
The abstract additive coarse correction is $M^{-1}+ZE^{-1}Z^T$, where $M$ is the sum of the local solves in each subdomain. Another popular preconditioner in domain decomposition method is the abstract balancing preconditioner \cite{ErlanggaSimax}
\begin{eqnarray}\label{de:P_BNN}
P_{BNN}=(I-ZE^{-1}Z^{T}A)M^{-1}(I-AZE^{-1}Z^{T})+ZE^{-1}Z^{T}.
\end{eqnarray}
The adapted deflation preconditioner \cite{TangNVE} is defined as
\begin{eqnarray*}
P_{ADEF1}=M^{-1}P_D+ZE^{-1}Z^{T}.
\end{eqnarray*}
It is shown in \cite{TangNVE} that $P_{ADEF1}$ is cheaper than $P_{BNN}$ but is as robust as $P_{BNN}$. Moreover, $P_{BNN}A$ and $P_{ADEF1}A$ have an identical spectrum. In both $P_{BNN}$ and $P_{ADEF1}$, the first term corresponds to the fine space and the second term is for the coarse space, therefore they are called two-level preconditioners. Throughout this paper, we restrict our analysis to one-level methods. Let $M$ in $P_{ADEF1}$ be $I$. We define $P_A$ as
\begin{eqnarray}\label{de:P_A-DEF1}
P_A=I-AZE^{-1}Z^T+ZE^{-1}Z^{T}.
\end{eqnarray}

Let $X$ be any basis of a coarse space not $Z$. Obviously the following identity
$$X(X^TAX)^{-1}X^T=Z(Z^TAZ)^{-1}Z^T$$
holds in exact arithmetic since there exists a nonsingular matrix $C\in\mathbb{R}^{r\times r}$ such that $X=ZC$. This implies that $P_D$, $P_C$ and $P_A$ are determined uniquely by the coarse space. Thus we can choose an appropriate basis to form $Z$ and then to construct a preconditioner with certain desirable properties.

As we will see, if an approximate coarse space is used to construct preconditioners, then the spectrum of the preconditioned systems is related to the angle between the approximate and exact coarse spaces. We also prove that the coarse correction and adapted deflation preconditioners are more roust than the deflation preconditioner when the projection matrix is solved inexactly. In section 2, we first review the spectral properties of the preconditioned system when the preconditioners are constructed with the exact coarse space, then we estimate the spectral bounds of the preconditioned systems when the approximate coarse space is used. Section 3 presents the perturbation analysis on the spectrum of the preconditioned system in the case that the projection matrix have some perturbation. Numerical results are reported in Section 4.

\section{Coarse space spanned by the approximate coarse space}\label{se:spectrum}
In this section, we briefly review the spectrum of the system preconditioned by using the exact coarse space, then based on these properties we derive the bounds of the spectrum of the system preconditioned by using the approximate coarse space. Let $(\lambda_i, v_i)$ be an eigenpair of $A$ and $v_i$ be normalized. $(v_i,\cdots,v_n)$ is orthogonal since $A$ is SPD. The spectral decomposition of $A$ can be written as
\begin{eqnarray}
A=(V,V_\perp)\left(\begin{array}{cc}\Lambda & 0\\ 0 & \Lambda_\perp\end{array}\right)\left(\begin{array}{c}V^T\\V^T_\perp\end{array}\right),
\end{eqnarray}
where $\Lambda=diag\{\lambda_1,\cdots,\lambda_r\}$, $\Lambda_\perp=diag\{\lambda_{r+1},\cdots,\lambda_n\}$, $V=(v_1,\cdots,v_r)$, and $V_\perp=(v_{r+1},\cdots,v_n)$.

Assume that $\lambda_1$, $\lambda_2$, $\cdots$, $\lambda_r$ are small eigenvalues that impacts the convergence of the Krylov subspace methods. Theorem \ref{th:exact_coarse_space} and Theorem \ref{th:exactright} show that if we take $Z=V$, the small eigenvalues are removed or shifted when the preconditioners are applied on either side of $A$, moreover, the rest of eigenvalues and all the eigenvectors are not changed.
\begin{theorem}\label{th:exact_coarse_space}
Let $\tilde E=V^TAV$. Define 
\begin{eqnarray*}
  \tilde P_D &=& I-AV\tilde E^{-1}V^T,\\
  \tilde P_C &=& I+V\tilde E^{-1}V^T,\\
  \tilde P_A &=& I-AV\tilde E^{-1}V^T+V\tilde E^{-1}V^T.
\end{eqnarray*}
Then we have the following spectral decomposition
\begin{eqnarray*}
  \tilde P_DA &=& (V,V_\perp)\left(\begin{array}{cc} 0 & 0\\ 0 & \Lambda_\perp\end{array}\right)\left(\begin{array}{c}V^T\\V^T_\perp\end{array}\right), \\
  \tilde P_CA &=& (V,V_\perp)\left(\begin{array}{cc} I+\Lambda & 0\\ 0 & \Lambda_\perp\end{array}\right)\left(\begin{array}{c}V^T\\V^T_\perp\end{array}\right), \\
  \tilde P_AA &=& (V,V_\perp)\left(\begin{array}{cc} I & 0\\ 0 & \Lambda_\perp\end{array}\right)\left(\begin{array}{c}V^T\\V^T_\perp\end{array}\right).
\end{eqnarray*}
\end{theorem}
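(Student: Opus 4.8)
The plan is to exploit the fact that the columns of $V$ are orthonormal eigenvectors of $A$, which collapses every operator in the statement to a combination of two orthogonal projectors. First I would note that $AV=V\Lambda$ and $V^TV=I_r$, so the projection matrix reduces to $\tilde E=V^TAV=\Lambda$, which is diagonal and trivially invertible with $\tilde E^{-1}=\Lambda^{-1}$. This is the only place where the orthonormality of the eigenbasis is really used.

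Next I would record the two reductions that do all the work. Using $AV\tilde E^{-1}=V$ (which follows from $AV=V\Lambda$ and $\tilde E^{-1}=\Lambda^{-1}$) and $V^TA=\Lambda V^T$ (the transpose of $AV=V\Lambda$, together with $A=A^T$), one obtains $AV\tilde E^{-1}V^TA=V\Lambda V^T$ and $V\tilde E^{-1}V^TA=V\Lambda^{-1}\Lambda V^T=VV^T$. Thus, once applied to $A$, both coarse-space correction terms reduce to simple expressions in $V$, $\Lambda$, and $\Lambda_\perp$.

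With these in hand the three computations are immediate. For the deflation operator, $\tilde P_DA=A-AV\tilde E^{-1}V^TA=A-V\Lambda V^T$, and since $A=V\Lambda V^T+V_\perp\Lambda_\perp V_\perp^T$ this leaves exactly $V_\perp\Lambda_\perp V_\perp^T$. For the coarse correction operator, $\tilde P_CA=A+V\tilde E^{-1}V^TA=A+VV^T$. For the adapted operator, $\tilde P_AA=\tilde P_DA+V\tilde E^{-1}V^TA=V_\perp\Lambda_\perp V_\perp^T+VV^T$.

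Finally I would rewrite each result in the claimed block-diagonal form relative to the orthogonal matrix $(V,V_\perp)$. Using $V^TV=I_r$, $V_\perp^TV_\perp=I_{n-r}$ and $V^TV_\perp=0$, the building blocks read $VV^T=(V,V_\perp)\,diag\{I,0\}\,(V,V_\perp)^T$, $V\Lambda V^T=(V,V_\perp)\,diag\{\Lambda,0\}\,(V,V_\perp)^T$, and $V_\perp\Lambda_\perp V_\perp^T=(V,V_\perp)\,diag\{0,\Lambda_\perp\}\,(V,V_\perp)^T$, where the entries denote the natural $r$ and $n-r$ sized blocks. Substituting and adding the diagonal blocks yields $diag\{0,\Lambda_\perp\}$, $diag\{I+\Lambda,\Lambda_\perp\}$, and $diag\{I,\Lambda_\perp\}$ in the middle factor, which are precisely the three stated decompositions. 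There is no genuine obstacle in this argument; it is a direct calculation, and the only point needing a little care is the last step, where one must consistently use the orthogonality relations so that all cross blocks vanish.
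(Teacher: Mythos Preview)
Your proposal is correct and follows essentially the same direct computation as the paper: both reduce $\tilde E$ to $\Lambda$ via $V^TAV=V^TV\Lambda$ and then exploit orthogonality of the eigenbasis. The only cosmetic difference is that the paper verifies each decomposition by computing the action of $\tilde P_*A$ on $V$ and $V_\perp$ separately, whereas you simplify the full operator as a sum of outer products and then read off the block structure; the underlying identities ($AV\tilde E^{-1}V^TA=V\Lambda V^T$ and $V\tilde E^{-1}V^TA=VV^T$) are the same in both.
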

\begin{proof}
 From the definition of $\tilde E$, we have $$\tilde E=V^TAV=V^TV\Lambda=\Lambda.$$
 Next, let us consider the first result. Obviously, $\tilde P_DAV=0$. 
 We then have
 \begin{eqnarray*}
    \tilde P_DAV_\perp &=& AV_\perp-AV\Lambda^{-1}V^TAV_\perp\\
                       &=& V_\perp\Lambda_\perp-VV^TV_\perp\Lambda_\perp\\
                       &=& V_\perp\Lambda_\perp.
 \end{eqnarray*}
 Thus the first spectral decomposition holds. Since 
 \begin{eqnarray*}
   \tilde P_CA = A+AV\tilde E^{-1}V^T = A+V\Lambda\Lambda^{-1} V^T = A+VV^T,
 \end{eqnarray*}
 we have $$\tilde P_CAV=AV+VV^TV=V(\Lambda+I)$$ and $$\tilde P_CAV_\perp=AV_\perp+VV^TV_\perp=AV_\perp=V_\perp\lambda_\perp.$$
 Hence, the second spectral decomposition is true. The third spectral decomposition follows from 
 $$\tilde P_AAV=AV-AV\tilde E^{-1}V^TAV+V\tilde E^{-1}V^TAV=V$$ and
 \begin{eqnarray*}
   \tilde P_AAV_\perp &=& AV_\perp-AV\tilde E^{-1}V^TAV_\perp+V\tilde E^{-1}V^TAV_\perp\\
                      &=& AV_\perp-AV\tilde E^{-1}V^TV_\perp\Lambda_\perp+V\tilde E^{-1}V^TV_\perp\Lambda_\perp\\
                      &=& AV_\perp = V_\perp\Lambda_\perp
 \end{eqnarray*}
 It follows immediately from these three spectral decomposition that $P_DA$, $P_CA$ and $P_AA$ are symmetric.
\end{proof}

\begin{theorem}\label{th:exactright}
 $\tilde P_D$, $\tilde P_C$ and $\tilde P_A$ are defined in Theorem \ref{th:exact_coarse_space}. Then we have $A\tilde P_D=\tilde P_DA$, $A\tilde P_C=\tilde P_CA$ and $A\tilde P_A=\tilde P_AA$.
\end{theorem}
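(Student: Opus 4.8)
The plan is to reduce everything to symmetry. Theorem~\ref{th:exact_coarse_space} already tells us that each of the products $\tilde P_DA$, $\tilde P_CA$ and $\tilde P_AA$ is symmetric, since each was exhibited there as $(V,V_\perp)D(V,V_\perp)^T$ with $D$ diagonal and $(V,V_\perp)$ orthogonal. The only additional ingredient I need is that each preconditioner $\tilde P_D$, $\tilde P_C$, $\tilde P_A$ is \emph{itself} symmetric; once I have that, the commutativity relations follow immediately.

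To establish symmetry of the preconditioners, I would use that the columns of $V$ are $A$-eigenvectors, so $AV=V\Lambda$, together with the identity $\tilde E=\Lambda$ already computed in the proof of Theorem~\ref{th:exact_coarse_space}. The key simplification is that the deflation term collapses: $AV\tilde E^{-1}V^T = V\Lambda\Lambda^{-1}V^T = VV^T$. Substituting this gives $\tilde P_D=I-VV^T$, $\tilde P_C=I+V\Lambda^{-1}V^T$ and $\tilde P_A=I-VV^T+V\Lambda^{-1}V^T$, each of which is manifestly symmetric because $VV^T$ and $V\Lambda^{-1}V^T$ are symmetric.

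With symmetry of both factors in hand, I would finish by transposing. Writing $\tilde P$ for any one of the three preconditioners, we have $A=A^T$ and $\tilde P=\tilde P^T$, hence $A\tilde P=A^T\tilde P^T=(\tilde P A)^T$; but $\tilde P A$ is symmetric by Theorem~\ref{th:exact_coarse_space}, so $(\tilde P A)^T=\tilde P A$, and therefore $A\tilde P=\tilde P A$, as claimed.

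I do not expect any serious obstacle here; the entire content sits in the collapse $AV\tilde E^{-1}V^T=VV^T$, which is exactly the place where using the \emph{exact} coarse space $Z=V$ is essential. As an alternative that avoids even the symmetry-of-$\tilde P$ step, one could note that Theorem~\ref{th:exact_coarse_space} diagonalizes $\tilde P A$ in the same orthogonal basis $(V,V_\perp)$ that diagonalizes $A$; the two diagonal factors commute, so $(\tilde P A)A=A(\tilde P A)$, i.e. $\tilde P A^2=A\tilde P A$, and cancelling the invertible factor $A$ on the right yields $A\tilde P=\tilde P A$.
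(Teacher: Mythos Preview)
Your argument is correct. The collapse $AV\tilde E^{-1}V^T=VV^T$ makes each of $\tilde P_D$, $\tilde P_C$, $\tilde P_A$ visibly symmetric, and combining this with the symmetry of $\tilde P A$ established in Theorem~\ref{th:exact_coarse_space} gives $A\tilde P=(\tilde P A)^T=\tilde P A$ in one stroke. Your alternative via simultaneous diagonalization is also fine.

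The paper takes a less uniform route. For $\tilde P_C$ it uses exactly your symmetry argument (there the symmetry of $\tilde P_C$ is immediate from its definition without needing the collapse). For $\tilde P_D$ and $\tilde P_A$, however, it verifies $A\tilde P=\tilde P A$ by direct computation on the eigenbasis $(V,V_\perp)$, checking the action column-block by column-block; the collapse identity appears only implicitly in the $\tilde P_A$ computation. Your version is more economical: recognizing that the same collapse also renders $\tilde P_D$ and $\tilde P_A$ symmetric lets a single transposition argument handle all three cases, avoiding the repeated eigenbasis checks. The paper's approach, in turn, has the minor advantage of being self-contained (it does not cite the final symmetry remark of Theorem~\ref{th:exact_coarse_space}), but yours is the cleaner proof.
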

\begin{proof}
 The first result follows from
 $$A\tilde P_DV=AV-AV\Lambda\Lambda^{-1}V^TV=0=\tilde P_DAV$$ and
 $$A\tilde P_DV_\perp=AV_\perp+AV\Lambda^{-1}V^TV_\perp=V_\perp\Lambda_\perp=\tilde P_DAV_\perp.$$
 Note that $A$, $\tilde P_C$ and $\tilde P_CA$ are symmetric. We then have
 $$\tilde P_CA=(\tilde P_CA)^T=A^T\tilde P^T_C=A\tilde P_C.$$
 Since
 \begin{eqnarray*}
   A\tilde P_A &=& A-A^2V\tilde E^{-1}V^T+AV\tilde E^{-1}V^T\\
               &=& A-AV\Lambda\Lambda^{-1}V^T+V\Lambda\Lambda^{-1}V^T\\
               &=& A-AVV^T+VV^T,
 \end{eqnarray*}
 we get $$A\tilde P_AV=AV-AVV^TV+VV^TV=V=\tilde P_AAV$$ and $$A\tilde P_AV_\perp=AV_\perp+AVV^TV_\perp+VV^TV_\perp=AV_\perp=V_\perp\Lambda_\perp=\tilde P_AAV_\perp.$$
 The third result is therefore true. In addition, $AP_D$, $AP_C$ and $AP_A$ are symmetric as well.
\end{proof}

For a large system, it is impractical to build the preconditioners by using the exact eigenvectors associated with the small eigenvalues, since in general computing these eigenvectors is more costly and more difficult than solving a linear system. Nevertheless, for some cases, the approximate eigenvectors are cheaply obtained and thus are used to produce the preconditioners. For example, in the Newton method for solving nonlinear problems, the information obtained during solving the first linear system is reused to build the coarse spaces for accelerating the convergence of the succeeding linear systems, see \cite{Gosselet} and \cite{NatafMikolajZhao}. We hope that the system preconditioned by using the approximate coarse space has the similar eigenvalue distribution as described in Theorem \ref{th:exact_coarse_space}. This motivates us to analyse how the perturbation in the coarse space impacts the spectrum of the system preconditioned by $P_D$, $P_C$ and $P_A$. 

In the following discussion we assume that $Z$ is column orthogonal. Let $\mathcal{Z}$ be the subspace spanned by the columns of $Z$. Let $\mathcal{Z}^\perp$ be the orthogonal complement of $\mathcal{Z}$ and $Z_\perp$ be an orthogonal basis of $\mathcal{Z}^\perp$. Likewise, let $\mathcal{V}$ be the subspace spanned by the columns of $V$, $\mathcal{V}^\perp$ be the orthogonal complement of $\mathcal{V}$ and $V_\perp$ be an orthogonal basis of $\mathcal{V}^\perp$. Let $\sigma$ denote the singular value of a matrix. Let $dist(\mathcal{Z},\mathcal{V})$ denote the distance between subspaces $\mathcal{Z}$ and $\mathcal{V}$. It is shown in \cite{Gene} that $dist(\mathcal{Z},\mathcal{V})$ can be evaluated by either $\sigma_{max}(Z^TV_\perp)$ or $\sigma_{max}(V^TZ_\perp)$. Note that $0\le dist(\mathcal{Z},\mathcal{V})\le1$ since $\mathcal{Z}$ and $\mathcal{V}$ have the same dimension. We define the acute angle between subspaces $\mathcal{Z}$ and $\mathcal{V}$ as $\theta=\arcsin dist(\mathcal{Z}, \mathcal{V})$.
The next lemma shows that $\cos\theta$ can be evaluated in a similar way as $\sin\theta$.
\begin{lemma}\label{le:angle}
Let $\theta$ be the acute angle between subspaces $\mathcal{Z}$ and $\mathcal{V}$ that have the same dimension. Let $Z$ and $V$ be the orthogonal bases of $\mathcal{Z}$ and $\mathcal{V}$ respectively. Then
\begin{eqnarray*}
\sin\theta &=& \sigma_{max}(Z^TV_\perp)=\sigma_{max}(V^TZ_\perp),\\
\cos\theta &=& \sigma_{min}(Z^TV)=\sigma_{min}(Z^T_\perp V_\perp).
\end{eqnarray*}
\end{lemma}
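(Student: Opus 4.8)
The plan is to treat the two $\sin\theta$ identities as already in hand: the first equals $dist(\mathcal{Z},\mathcal{V})$ by the very definition $\theta=\arcsin dist(\mathcal{Z},\mathcal{V})$ adopted just above, and the second, $\sigma_{max}(V^TZ_\perp)$, is exactly the alternative evaluation of $dist$ recorded from \cite{Gene}. So the substance of the lemma is the pair of $\cos\theta$ formulas, and I would derive both from a single elementary ``Pythagorean'' identity obtained by inserting a resolution of the identity.

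Concretely, I would first note that since $[V\ V_\perp]$ is orthogonal, $VV^T+V_\perp V_\perp^T=I_n$, while $Z$ having orthonormal columns gives $Z^TZ=I_r$. Multiplying the completeness relation on the left by $Z^T$ and on the right by $Z$ yields
$$(Z^TV)(Z^TV)^T+(Z^TV_\perp)(Z^TV_\perp)^T=I_r.$$
Writing $B=Z^TV$ and $C=Z^TV_\perp$, this reads $BB^T=I_r-CC^T$, so the eigenvalues of the symmetric matrix $BB^T$ are precisely $1$ minus those of $CC^T$. Since the eigenvalues of $BB^T$ and $CC^T$ are the squared singular values of $B$ and $C$, the smallest singular value of $B$ pairs with the largest of $C$:
$$\sigma_{min}^2(Z^TV)=1-\sigma_{max}^2(Z^TV_\perp)=1-\sin^2\theta=\cos^2\theta,$$
and taking nonnegative roots (legitimate since $\theta$ is acute, so $\cos\theta\ge0$) gives $\sigma_{min}(Z^TV)=\cos\theta$.

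For the last identity I would run the same argument on the orthogonal complements. Using $ZZ^T+Z_\perp Z_\perp^T=I_n$ together with $V_\perp^TV_\perp=I_{n-r}$, and multiplying on the left by $V_\perp^T$ and on the right by $V_\perp$, I obtain
$$(Z^TV_\perp)^T(Z^TV_\perp)+(Z_\perp^TV_\perp)^T(Z_\perp^TV_\perp)=I_{n-r}.$$
Every eigenvalue of $C^TC=(Z^TV_\perp)^T(Z^TV_\perp)$ is a squared singular value of $C$ and hence lies in $[0,\sin^2\theta]$, so every eigenvalue of $(Z_\perp^TV_\perp)^T(Z_\perp^TV_\perp)$ lies in $[\cos^2\theta,1]$, with the minimum attained exactly where $C^TC$ attains its maximum $\sin^2\theta$. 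Therefore $\sigma_{min}^2(Z_\perp^TV_\perp)=1-\sin^2\theta=\cos^2\theta$, i.e.\ $\sigma_{min}(Z_\perp^TV_\perp)=\cos\theta$.

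The only genuine care point, and a mild one at that, is the bookkeeping forced by $C=Z^TV_\perp$ being rectangular ($r\times(n-r)$): one cannot simply match up the full lists of singular values of $B$ and $C$. I therefore argue in each case through the eigenvalues of the \emph{square} Gram matrices ($CC^T$ in the first step, $C^TC$ in the second) and use only the containment of their spectra in $[0,\sin^2\theta]$ to isolate the extreme eigenvalue. This keeps the whole proof self-contained, resting on the two completeness identities above and avoiding any appeal to the CS decomposition.
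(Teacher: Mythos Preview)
Your proof is correct and follows essentially the same route as the paper's own argument. Both take the $\sin\theta$ identities as given from \cite{Gene}, and for each $\cos\theta$ identity insert the completeness relation (the paper writes it as $\|(V,V_\perp)^TZx\|_2^2=1$ and $\|(Z,Z_\perp)^TV_\perp x\|_2^2=1$ for unit $x$, you write it as the matrix Gram identities $BB^T+CC^T=I_r$ and $C^TC+(Z_\perp^TV_\perp)^T(Z_\perp^TV_\perp)=I_{n-r}$) to obtain $\sigma_{min}^2=1-\sigma_{max}^2$; the only difference is that the paper phrases the extremization via the variational characterization of singular values while you phrase it via eigenvalues of the square Gram matrices.
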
\begin{proof}
The first identity and its proof can be found in \cite[Theorem 2.6.1]{Gene}. We only prove the second identity here. Since $(V,V_\perp)$ is orthogonal, it follows from $\|(V,V_\perp)^TZx\|_2=1$ for all unit 2-norm $x\in\mathbb{R}^r$ that $\|V^TZx\|^2_2+\|V^T_\perp Zx\|^2_2=1.$ Thus
\begin{eqnarray*}
\sigma_{min}(V^TZ)^2 &=& \min_{\|x\|_2=1}\|V^TZx\|^2_2=1-\max_{\|x\|_2=1}\|V^T_\perp Zx\|^2_2\\
                     &=& 1-\sigma_{max}(V^T_\perp Z)^2=\cos^2\theta.
\end{eqnarray*}
Similarly, since $(Z,Z_\perp)$ is orthogonal, it follows from $\|(Z,Z_\perp)^TV_\perp x\|_2=1$ for all unit 2-norm $x\in\mathbb{R}^{(n-r)}$ that $\|Z^TV_\perp x\|^2_2+\|Z^T_\perp V_\perp x\|^2_2=1.$ Thus
\begin{eqnarray*}
\sigma_{min}(Z^T_\perp V_\perp)^2 &=& \min_{\|x\|_2=1}\|Z^T_\perp V_\perp x\|^2_2=1-\max_{\|x\|_2=1}\|Z^TV_\perp x\|^2_2\\
                                &=& 1-\sigma_{max}(Z^TV_\perp)^2=\cos^2\theta.
\end{eqnarray*}
The second identity is therefore valid. 
\end{proof}

Let $P_D$ be defined by (\ref{de:P_D}). $P_DA$ is an symmetric matrix since $A$ is SPD. Hence, Courant-Fischer Minimax Theorem \cite{Gene} can be applied to estimate the eigenvalues of $P_DA$. As is known that if $A\in\mathbb{R}^{n\times n}$ is symmetric, then
$$\lambda_k(A)=\max_{dim(S)=k}\min_{x\in S,\|x\|_2=1}x^TAx,~~k=1,\dots,n.$$
\begin{theorem}\label{th:bo:P_DA}
Let $P_D$ be defined by (\ref{de:P_D}) and $\theta$ be the acute angle between $\mathcal{V}$ and $\mathcal{Z}$. Then $P_DA$ has $r$ zero eigenvalues. Moreover, if $\cos\theta\ne0$, then the nonzero eigenvalues of $P_DA$ satisfy
\begin{eqnarray*}
\lambda_{min}(\Lambda_\perp)-\varepsilon_D\le\lambda(P_DA)\le\lambda_{max}(\Lambda_\perp)+\eta_D,
\end{eqnarray*}
where $\eta_D=\lambda_{max}(\Lambda_\perp)(\sin\theta+\sin^2\theta)$ and
$\varepsilon_D = \eta_D+\|E^{-1}\|_2(\|E\|_2+\lambda_{max}(\Lambda_\perp))^2\tan^2\theta$.
\end{theorem}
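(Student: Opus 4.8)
The plan is to use the symmetry of $P_DA$ to split off the zero eigenvalues and then reduce the nonzero spectrum to a Rayleigh quotient over $\mathcal{Z}^\perp$, which I can bound once I know how $\mathcal{Z}^\perp$ relates to $\mathcal{V}^\perp$ through the angle $\theta$. First I would record the null-space structure. Since $E=Z^TAZ$, a direct computation gives $P_DAZ=AZ-AZE^{-1}(Z^TAZ)=0$, so every column of $Z$ is a null vector; conversely $P_DAx=0$ forces $Ax\in A\mathcal{Z}$ and hence $x\in\mathcal{Z}$ because $A$ is nonsingular. Thus $\ker(P_DA)=\mathcal{Z}$ has dimension $r$, accounting for the $r$ zero eigenvalues. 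Because $P_DA=A-AZE^{-1}Z^TA$ is symmetric, its range is $\mathcal{Z}^\perp$ and all nonzero eigenvectors lie there; consequently the nonzero eigenvalues are exactly the values of the Rayleigh quotient $x^TP_DAx$ over unit vectors $x\in\mathcal{Z}^\perp$. Writing $w=Z^TAx$, this quotient equals $x^TAx-w^TE^{-1}w$, and since $E^{-1}$ is SPD the subtracted term is nonnegative.

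The geometric crux is to confine such $x$ to $\mathcal{V}^\perp$. I would decompose $x=Va+V_\perp b$ with $a=V^Tx$, $b=V_\perp^Tx$, and set $C=V^TZ$, $S=V_\perp^TZ$, so that Lemma~\ref{le:angle} gives $\sigma_{min}(C)=\cos\theta$ and $\sigma_{max}(S)=\sin\theta$. The constraint $Z^Tx=0$ reads $C^Ta+S^Tb=0$, so (using $\cos\theta\ne0$ to invert $C$) $a=-C^{-T}S^Tb$ and therefore $\|a\|\le\|C^{-1}\|_2\,\|S\|_2\,\|b\|=\tan\theta\,\|b\|$. Combined with $\|a\|^2+\|b\|^2=1$ this yields $\|a\|\le\sin\theta$ and $\|b\|^2\ge\cos^2\theta$, i.e. every unit $x\in\mathcal{Z}^\perp$ has only an $O(\sin\theta)$ component in the troublesome subspace $\mathcal{V}$. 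Feeding this into $x^TAx=a^T\Lambda a+b^T\Lambda_\perp b$ gives both one-sided bounds: from below $x^TAx\ge\lambda_{min}(\Lambda_\perp)(1-\|a\|^2)\ge\lambda_{min}(\Lambda_\perp)-\eta_D$, and from above $x^TAx\le\lambda_{max}(\Lambda_\perp)(1+\sin^2\theta)\le\lambda_{max}(\Lambda_\perp)+\eta_D$, the slack being absorbed into $\eta_D$ after using $\lambda_{max}(\Lambda)\le\lambda_{max}(\Lambda_\perp)$.

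The upper bound on $\lambda(P_DA)$ then follows by simply discarding the nonnegative term $w^TE^{-1}w$, leaving $x^TP_DAx\le x^TAx\le\lambda_{max}(\Lambda_\perp)+\eta_D$. The lower bound is the delicate half: I must control $w^TE^{-1}w\le\|E^{-1}\|_2\|w\|^2$, which requires a sharp estimate of $\|w\|=\|C^T\Lambda a+S^T\Lambda_\perp b\|$. Bounding the pieces separately, $\|S^T\Lambda_\perp b\|\le\lambda_{max}(\Lambda_\perp)\sin\theta$ and $\|C^T\Lambda a\|\le\lambda_{max}(\Lambda)\|a\|\le\lambda_{max}(\Lambda)\tan\theta$; the step that makes the stated constant appear is Cauchy interlacing for the orthonormal $Z$, which gives $\lambda_{max}(\Lambda)=\lambda_r(A)\le\lambda_{max}(E)=\|E\|_2$, so $\|C^T\Lambda a\|\le\|E\|_2\tan\theta$. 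Adding the two pieces and using $\sin\theta\le\tan\theta$ gives $\|w\|\le(\|E\|_2+\lambda_{max}(\Lambda_\perp))\tan\theta$, hence $w^TE^{-1}w\le\|E^{-1}\|_2(\|E\|_2+\lambda_{max}(\Lambda_\perp))^2\tan^2\theta$, and combining with the lower bound on $x^TAx$ yields $x^TP_DAx\ge\lambda_{min}(\Lambda_\perp)-\varepsilon_D$.

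The main obstacle I anticipate is not the upper bound, which is almost immediate, but producing the lower bound with exactly the stated $\varepsilon_D$: this hinges on matching $\|w\|$ to the form $(\|E\|_2+\lambda_{max}(\Lambda_\perp))\tan\theta$, and the non-obvious inequality $\lambda_{max}(\Lambda)\le\|E\|_2$ that lets the $C^T\Lambda a$ contribution be absorbed into $\|E\|_2$. Everything upstream rests on the confinement estimate $\|V^Tx\|\le\sin\theta$ for $x\in\mathcal{Z}^\perp$, so I would treat that small lemma as the organizing idea and verify carefully that inverting $C$ (the place where the hypothesis $\cos\theta\ne0$ is genuinely used) is the only point where the argument could break down.
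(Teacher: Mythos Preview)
Your argument is correct and reaches the stated bounds, but the route differs from the paper's in two places worth noting.

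The paper does not restrict to $x\in\mathcal{Z}^\perp$. Instead it writes, for an arbitrary unit $x=x_1+x_2$ with $x_1\in\mathcal{Z}$, $x_2\in\mathcal{Z}^\perp$, the identity
\[
x^TP_DAx \;=\; x^T\tilde P_DAx \;+\; \bigl(x_2^TAx_2-x^TV_\perp\Lambda_\perp V_\perp^Tx\bigr)\;-\;x_2^TAZE^{-1}Z^TAx_2,
\]
bounds the bracketed difference uniformly by $\eta_D$, and then applies Courant--Fischer to compare the eigenvalues of $P_DA$ and $\tilde P_DA$ index by index. For the last term the paper bounds $\|Z_\perp^TAZ\|_2$: starting from $AZ=ZE+Z_\perp(Z_\perp^TAZ)$, left-multiplying by $V_\perp^T$ and inverting $V_\perp^TZ_\perp$ (this is where $\cos\theta\ne0$ is used) yields $\|Z_\perp^TAZ\|_2\le(\|E\|_2+\|\Lambda_\perp\|_2)\tan\theta$, from which the $\tan^2\theta$ contribution to $\varepsilon_D$ follows.

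Your approach is more direct: after restricting to $\mathcal{Z}^\perp$ the Rayleigh quotient is simply $x^TAx-w^TE^{-1}w$, and you bound both pieces separately using the confinement estimate $\|V^Tx\|\le\sin\theta$ obtained by inverting $C=V^TZ$ (your place where $\cos\theta\ne0$ enters). Your upper bound on $x^TAx$ is in fact sharper than the paper's --- the $\sin\theta$ part of $\eta_D$ is never actually needed --- and your use of Cauchy interlacing $\lambda_{max}(\Lambda)=\lambda_r(A)\le\lambda_r(E)=\|E\|_2$ is exactly the device that produces the constant $\|E\|_2$ in $\varepsilon_D$; the paper does not need this step because $\|E\|_2$ appears automatically in its formula for $Z_\perp^TAZ$. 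Both proofs are valid; yours is somewhat cleaner since it avoids the intermediate comparison with $\tilde P_DA$ and the associated cross terms.
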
\begin{proof}
It follows from $P_DAZ=0$ that $P_DA$ has $r$ zero eigenvalues. From Theorem \ref{th:exact_coarse_space}, we have
\begin{eqnarray}\label{eq:xtP_DAx}
x^T\tilde P_DAx &=& x^TV_\perp\Lambda_\perp V^T_\perp x.
\end{eqnarray}

For all unit 2-norm $x\in\mathbb{R}^n$, it can be written as $x=x_1+x_2$, where $x_1\in\mathcal{Z}$ and $x_2\in\mathcal{Z}^\perp$. Moreover, there exist $t\in\mathbb{R}^r$ and $s\in\mathbb{R}^{n-r}$ such that $x_1=Zt$ and $x_2=Z_\perp s$, and
\begin{eqnarray*}
x^TAZE^{-1}Z^TAx &=& x^T_1AZE^{-1}Z^TAx_1+x^T_1AZE^{-1}Z^TAx_2\\
                 & & +x^T_2AZE^{-1}Z^TAx_1+x^T_2AZE^{-1}Z^TAx_2\\
                 &=& t^TZ^TAZt+t^TZ^TAx_2+x^T_2AZt\\
                 & & +x^T_2AZE^{-1}Z^TAx_2\\
                 &=& x^T_1Ax_1+x^T_1Ax_2+x^T_2Ax_1+x^T_2AZE^{-1}Z^TAx_2\\
                 &=& x^TAx-x^T_2Ax_2+x^T_2AZE^{-1}Z^TAx_2.
\end{eqnarray*}
Then we obtain
\begin{eqnarray}\label{eq:xP_DAx}
x^TP_DAx &=& x^TAx-x^TAZE^{-1}Z^TAx\\
         &=& x^T_2Ax_2-x^T_2AZE^{-1}Z^TAx_2.\nonumber
\end{eqnarray}
Subtract (\ref{eq:xtP_DAx}) from (\ref{eq:xP_DAx}) on both sides, we get
\begin{eqnarray}\label{eq:P_DA:eigenvalue}
x^TP_DAx = x^T\tilde P_DAx+x^T_2Ax_2-x^TV_\perp\Lambda_\perp V^T_\perp x-x^T_2AZE^{-1}Z^TAx_2.
\end{eqnarray}
The middle terms on the right-hand side of the above expression can be replaced by
\begin{eqnarray*}
x^T_2Ax_2-x^TV_\perp\Lambda_\perp V^T_\perp x &=& x^T_2Ax_2-(x^T_1+x^T_2)V_\perp\Lambda_\perp V^T_\perp(x_1+x_2)\\
                                         &=& x^T_2V\Lambda V^Tx_2-x^T_1V_\perp\Lambda_\perp V^T_\perp x_1\\
                                         & & -x^T_1V_\perp\Lambda_\perp V^T_\perp x_2-x^T_2V_\perp\Lambda_\perp V^T_\perp x_1\\
                                         &=& s^T(Z^T_\perp V)\Lambda(V^TZ_\perp)s-t^T(Z^TV_\perp)\Lambda_\perp(V^T_\perp Z)t\\
                                         & & -t^T(Z^TV_\perp)\Lambda_\perp(V^T_\perp Z_\perp)s-s^T(Z^T_\perp V_\perp)\Lambda_\perp(V^T_\perp Z)t.
\end{eqnarray*}
$\|x_1\|_2=\|t\|_2$ and $\|x_2\|_2=\|s\|_2$ since $Z$ and $Z_\perp$ are column orthogonal. Using Lemma \ref{le:angle}, we obtain
\begin{eqnarray}\label{bo:x2x}
|x^T_2Ax_2-x^TV_\perp\Lambda_\perp V^T_\perp x| &\le& (\|x_2\|^2_2\|\Lambda\|_2+\|x_1\|^2_2\|\Lambda_\perp\|_2)\sin^2\theta\\
&   & +2\|x_1\|_2\|x_2\|_2\|\Lambda_\perp\|_2\sin\theta\cos\theta\nonumber.
\end{eqnarray}

For the last item on the right-hand side of (\ref{eq:P_DA:eigenvalue}), we only need to estimate the bound of $Z^T_\perp AZ$ because of $x^T_2AZE^{-1}Z^TAx_2=s^T(Z^T_\perp AZ)E^{-1}(Z^T_\perp AZ)^Ts$. It follows from $(Z,Z_\perp)(Z,Z_\perp)^T=I$ that
$$AZ=Z(Z^TAZ)+Z_\perp(Z^T_\perp AZ).$$
Multiply by $V^T_\perp$ from the left on both sides of the above equation, we have
\begin{eqnarray*}
(V^T_\perp Z_\perp)(Z^T_\perp AZ) &=& V^T_\perp AZ-(V^T_\perp Z)(Z^TAZ)\\
                                  &=& \Lambda_\perp(V^T_\perp Z)-(V^T_\perp Z)(Z^TAZ).
\end{eqnarray*}
$V^T_\perp Z_\perp$ is invertible since $\cos\theta\ne0$. Thus
\begin{eqnarray}\label{eq:Z_perpAZ}
(Z^T_\perp AZ)=(V^T_\perp Z_\perp)^{-1}\Lambda_\perp(V^T_\perp Z)-(V^T_\perp Z_\perp)^{-1}(V^T_\perp Z)E.
\end{eqnarray}
Using Lemma \ref{le:angle}, we have $\|Z^T_\perp AZ\|_2\le(\|E\|_2+\|\Lambda_\perp\|_2)\tan\theta.$
Hence
\begin{eqnarray}\label{bo:upper:x2x2}
x^T_2AZE^{-1}Z^TAx_2\le\|x_2\|^2_2\|E^{-1}\|_2(\|E\|_2+\|\Lambda_\perp\|_2)^2\tan^2\theta.
\end{eqnarray}
$E^{-1}$ is SPD since $A$ is SPD. Consequently,
\begin{eqnarray}\label{bo:lower:x2x2}
x^T_2AZE^{-1}Z^TAx_2=(Z^TAx_2)^TE^{-1}(Z^TAx_2)\ge0.
\end{eqnarray}

$P_DA$ is symmetric since $A$ is SPD. Applying Courant-Fischer Minimax Theorem to (\ref{eq:P_DA:eigenvalue}) with (\ref{bo:x2x}) and (\ref{bo:lower:x2x2}), we have
\begin{eqnarray*}
\lambda(P_DA) &\le& \lambda(\tilde P_DA)+|x^T_2Ax_2-x^TV_\perp\Lambda_\perp V^T_\perp x|\\
&\le& \lambda_{max}(\Lambda_\perp)+\lambda_{max}(\Lambda_\perp)(\sin\theta+\sin^2\theta).
\end{eqnarray*}
Note that $P_DA$ has $r$ zero eigenvalues. Again, applying Courant-Fischer Minimax Theorem to (\ref{eq:P_DA:eigenvalue}) with (\ref{bo:x2x}) and (\ref{bo:upper:x2x2}), the lower bound of the nonzero eigenvalues of $P_DA$ is given by
\begin{eqnarray*}
\lambda(P_DA) &\ge& \lambda(\tilde P_DA)-|x^T_2Ax_2-x^TV_\perp\Lambda_\perp V^T_\perp x|-x^T_2AZE^{-1}Z^TAx_2\\
&\ge& \lambda_{min}(\Lambda_\perp)-\lambda_{max}(\Lambda_\perp)(\sin\theta+\sin^2\theta)\\
& & -\|E^{-1}\|_2(\|E\|_2+\lambda_{max}(\Lambda_\perp))^2\tan^2\theta.
\end{eqnarray*}
As a result the theorem is true.
\end{proof}

The above theorem shows that in exact arithmetic, as $\theta$ approaches zero, the maximal and minimal nonzero eigenvalues of $P_DA$ converge to $\lambda_{max}(\Lambda_\perp)$ and $\lambda_{min}(\Lambda_\perp)$, respectively. Hence with an appropriate coarse space the spectrum of $P_DA$ is similar to that of $\tilde P_DA$. When there exists rounding error, however, $P_DAZ$ may not be equal to a zero matrix. In this case, $P_DA$ possibly has some eigenvalues around zero that should be equal to zero in exact arithmetic. So there is a potential risk for $P_D$ to yield a poor spectrum of the preconditioned system.

The authors in \cite{TangNVE} investigated the properties of $P_D$, $P_{BNN}$ and $P_{ADEF1}$. They established the relations between these preconditioners in terms of the spectrum. Suppose that $M$ is an SPD matrix. Let the spectrum of $P_DM^{-1}A$ be given by $\{0,\dots,0,\gamma_{r+1},\dots,\gamma_n\}$ with $\gamma_{r+1}\le\gamma_{r+2}\le\cdots\le\gamma_n$. Let the spectrum of $P_{BNN}A$ and $P_{ADEF1}A$ be $\{1,\dots,1,\mu_{r+1},\dots,\mu_n\}$ with $\mu_{r+1}\le\mu_{r+2}\le\cdots\le\mu_n$. Then, $\gamma_i=\mu_i$ for all $i=r+1,\dots,n$. The proof of this result can be found in \cite[Theorem 3.3]{TangNVE}.
From the relation of $P_{D}$ and $P_{ADEF1}$, we immediately obtain the next corollary if we take $M=I$. 
\begin{corollary}\label{co:relation}
Let the spectrum of $P_DA$ be given by $\{0,\dots,0,\tilde\lambda_{r+1},\dots,\tilde\lambda_n\}$. Then the spectrum of $P_AA$ is $\{1,\dots,1,\tilde\lambda_{r+1},\dots,\tilde\lambda_n\}$.
\end{corollary}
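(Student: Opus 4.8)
The plan is to specialize the result quoted just above, \cite[Theorem 3.3]{TangNVE}, to the choice $M=I$, which is trivially SPD, and then to identify the operators produced by this choice with those named in the statement. First I would record that when $M=I$ the adapted deflation preconditioner collapses onto $P_A$: using $P_D=I-AZE^{-1}Z^T$ together with the definition $P_{ADEF1}=M^{-1}P_D+ZE^{-1}Z^T$, setting $M=I$ yields
\begin{eqnarray*}
P_{ADEF1}=P_D+ZE^{-1}Z^T=I-AZE^{-1}Z^T+ZE^{-1}Z^T=P_A,
\end{eqnarray*}
which is exactly (\ref{de:P_A-DEF1}). Likewise $P_DM^{-1}A=P_DA$ for $M=I$, so the matrix whose spectrum is assumed known in the hypothesis of the quoted theorem is precisely $P_DA$.

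With these two identifications, the second step is simply to read off the conclusion. I would write the spectrum of $P_DA=P_DM^{-1}A$ as $\{0,\dots,0,\gamma_{r+1},\dots,\gamma_n\}$ with the $\gamma_i$ sorted increasingly; by Theorem \ref{th:bo:P_DA} the zero eigenvalue has multiplicity $r$, so the $\gamma_{r+1},\dots,\gamma_n$ are the sorted values of $\tilde\lambda_{r+1},\dots,\tilde\lambda_n$. The quoted theorem then gives $\gamma_i=\mu_i$ for $i=r+1,\dots,n$, where $\{1,\dots,1,\mu_{r+1},\dots,\mu_n\}$ is the spectrum of $P_{ADEF1}A=P_AA$. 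Hence the non-unit eigenvalues of $P_AA$ coincide, as a multiset, with $\{\tilde\lambda_{r+1},\dots,\tilde\lambda_n\}$, and the remaining $r$ eigenvalues equal $1$, which is precisely the asserted spectrum.

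Because the corollary is a direct specialization, I do not anticipate a genuine obstacle in the argument; the only delicate point will be the bookkeeping. I would make sure that the multiplicity $r$ of the zero eigenvalue of $P_DA$ (guaranteed by Theorem \ref{th:bo:P_DA}) transfers to the unit eigenvalue of $P_AA$, so that exactly $n-r$ eigenvalues are inherited, and I would be careful to read the correspondence $\gamma_i=\mu_i$ as an equality of sorted sequences, equivalently of multisets, rather than as an eigenvector-by-eigenvector claim. Both facts are furnished directly by the quoted theorem together with Theorem \ref{th:bo:P_DA}, so the corollary follows at once.
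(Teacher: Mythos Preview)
Your proposal is correct and follows essentially the same approach as the paper: the paper derives the corollary simply by specializing the quoted result \cite[Theorem 3.3]{TangNVE} to $M=I$, which is exactly what you do, with the added (and welcome) care of explicitly verifying that $P_{ADEF1}$ reduces to $P_A$ and that the multiplicity bookkeeping works out.
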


Corollary \ref{co:relation} implies that if the spectrum of $P_DA$ is known, the one of $P_AA$ would be known. The spectral bounds of $P_DA$ are described in Theorem \ref{th:bo:P_DA}, so we can easily bound the spectrum of $P_AA$.
\begin{theorem}\label{th:bo:P_AA}
Let $P_A$ be defined by (\ref{de:P_A-DEF1}). Let $\theta$ the acute angle between subspaces $\mathcal{Z}$ and $\mathcal{V}$. If $\cos\theta\ne0$, then the eigenvalues of $P_AA$ satisfy $$\min\{1,\lambda_{min}(\Lambda_\perp)-\varepsilon_D\}\le\lambda(P_AA)\le\max\{1,\lambda_{max}(\Lambda_\perp)+\eta_D\},$$
where $\eta_D$ and $\varepsilon_D$ are defined in Theorem \ref{th:bo:P_DA}.
\end{theorem}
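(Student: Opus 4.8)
The plan is to leverage the spectral equivalence established in Corollary \ref{co:relation} together with the bounds already derived in Theorem \ref{th:bo:P_DA}, so that essentially no new computation is required. By Corollary \ref{co:relation}, the spectrum of $P_AA$ is obtained from that of $P_DA$ by replacing each of the $r$ zero eigenvalues with the value $1$, while the nonzero eigenvalues $\tilde\lambda_{r+1},\dots,\tilde\lambda_n$ are carried over unchanged. Consequently every eigenvalue of $P_AA$ either equals $1$ or coincides with one of the nonzero eigenvalues of $P_DA$.

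Next I would invoke Theorem \ref{th:bo:P_DA}, which under the standing hypothesis $\cos\theta\ne0$ confines each nonzero eigenvalue of $P_DA$ to the interval $[\lambda_{min}(\Lambda_\perp)-\varepsilon_D,\ \lambda_{max}(\Lambda_\perp)+\eta_D]$. Combining the two observations, the full spectrum of $P_AA$ is contained in the union of this interval with the single point $\{1\}$. Taking the infimum of this set gives the lower bound $\min\{1,\lambda_{min}(\Lambda_\perp)-\varepsilon_D\}$, and taking the supremum gives the upper bound $\max\{1,\lambda_{max}(\Lambda_\perp)+\eta_D\}$, which is precisely the claimed estimate.

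There is no genuine obstacle here, since all the analytical effort---the splitting $x=x_1+x_2$ with $x_1\in\mathcal{Z}$ and $x_2\in\mathcal{Z}^\perp$, the application of Lemma \ref{le:angle}, the perturbation bound on $Z^T_\perp AZ$ through \eqref{eq:Z_perpAZ}, and the Courant--Fischer argument---was already expended in the proof of Theorem \ref{th:bo:P_DA}. The only step warranting a moment's care is that the coarse-space eigenvalue $1$ must be compared against the endpoints of the fine-space interval; this is exactly why the final bounds are wrapped in $\min\{1,\cdot\}$ and $\max\{1,\cdot\}$ rather than stated as the bare interval endpoints, since for a given problem either the fine-space interval or the value $1$ may lie farther out.
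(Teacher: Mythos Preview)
Your proof is correct and follows exactly the paper's own argument: the author simply states that the result ``follows directly from Theorem \ref{th:bo:P_DA} and Corollary \ref{co:relation},'' which is precisely the combination you describe. Your additional paragraph spelling out why the $\min\{1,\cdot\}$ and $\max\{1,\cdot\}$ wrappers are needed is a welcome clarification but adds nothing beyond what the paper intends.
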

\begin{proof}
It follows directly from Theorem \ref{th:bo:P_DA} and Corollary \ref{co:relation}.
\end{proof}

Next, we consider the spectrum of $P_CA$. Note that $P_CA$ is not necessarily symmetric although both $A$ and $P_C$ are symmetric. So we can not apply Courant-Fischer Minimax theorem to estimate the eigenvalues of $P_CA$, which are positive since $P_CA$ is similar to a SPD matrix, and are a subset of $x^TP_CAx$ for all unit 2-norm $x\in\mathbb{R}^n$. Hence the eigenvalues of $P_CA$ can be bounded by estimating $x^TP_CAx$.
\begin{theorem}\label{th:bo:P_CA}
Let $P_C$ be defined by (\ref{de:P_C}), and $\theta$ the acute angle between $\mathcal{V}$ and $\mathcal{Z}$. If $\cos\theta\ne0$, then
\begin{eqnarray*}
\lambda_{max}(P_CA) &\le& \max\left\{1+\lambda_{max}(\Lambda), \lambda_{max}(\Lambda_\perp)\right\}+\varepsilon_C,\\
\lambda_{min}(P_CA) &\ge& \min\left\{1+\lambda_{min}(\Lambda), \lambda_{min}(\Lambda_\perp)\right\}-\varepsilon_C,
\end{eqnarray*}
where $\varepsilon_C=\frac{1}{2}(\lambda_{max}(\Lambda_\perp)\|E^{-1}\|_2+1)\tan\theta+\sin\theta+\sin^2\theta.$
\end{theorem}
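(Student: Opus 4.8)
\section*{Proof proposal}

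The plan is to follow the perturbation strategy used for $P_DA$ in Theorem \ref{th:bo:P_DA}, but to replace the Courant--Fischer characterization by a numerical-range argument, since $P_CA$ is not symmetric. First I would note that $P_C=I+ZE^{-1}Z^T$ is SPD (because $E^{-1}$ is SPD), so $P_CA$ is similar to the SPD matrix $P_C^{1/2}AP_C^{1/2}$ via $P_C^{1/2}(P_CA)P_C^{-1/2}=P_C^{1/2}AP_C^{1/2}$. Hence the eigenvalues of $P_CA$ are real and positive, eigenvectors may be taken real, and every eigenvalue $\mu$ equals the Rayleigh quotient $x^TP_CAx$ at its unit eigenvector $x$. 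It therefore suffices to bound $x^TP_CAx$ over all unit $x$, and I will do so by comparing it to the symmetric quadratic form $x^T\tilde P_CAx$, whose extreme values are exactly $\max\{1+\lambda_{max}(\Lambda),\lambda_{max}(\Lambda_\perp)\}$ and $\min\{1+\lambda_{min}(\Lambda),\lambda_{min}(\Lambda_\perp)\}$ by Theorem \ref{th:exact_coarse_space}.

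The core estimate is to show that $|x^TP_CAx-x^T\tilde P_CAx|\le\varepsilon_C$ for every unit $x$. Using $P_CA=A+ZE^{-1}Z^TA$ together with $\tilde P_CA=A+VV^T$ (computed in the proof of Theorem \ref{th:exact_coarse_space}), the common $x^TAx$ cancels and the difference reduces to
$$D:=x^TZE^{-1}Z^TAx-\|V^Tx\|_2^2.$$
I would then write $x=Zt+Z_\perp s$ with $\|t\|_2^2+\|s\|_2^2=1$, use $Z^Tx=t$ and $Z^TAx=Et+(Z^TAZ_\perp)s$ to obtain $x^TZE^{-1}Z^TAx=\|t\|_2^2+t^TE^{-1}(Z^TAZ_\perp)s$, and expand $\|V^Tx\|_2^2$ via the identity $Z^TVV^TZ=I-(Z^TV_\perp)(Z^TV_\perp)^T$. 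After the $\|t\|_2^2$ terms cancel, $D$ splits into four pieces: the cross term $t^TE^{-1}(Z^TAZ_\perp)s$, the two diagonal pieces $\|V_\perp^TZt\|_2^2$ and $-\|V^TZ_\perp s\|_2^2$, and the cross term $-2t^T(Z^TV)(V^TZ_\perp)s$.

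The three pieces other than the first are controlled directly by Lemma \ref{le:angle}: from $\|V^TZ_\perp\|_2=\|V_\perp^TZ\|_2=\sin\theta$ and $\|V^TZ\|_2\le1$ one gets $|\,\|V_\perp^TZt\|_2^2-\|V^TZ_\perp s\|_2^2\,|\le\sin^2\theta$ and $2|t^T(Z^TV)(V^TZ_\perp)s|\le2\|t\|_2\|s\|_2\sin\theta\le\sin\theta$, using $2\|t\|_2\|s\|_2\le1$. The delicate piece, and the main obstacle, is the cross term $t^TE^{-1}(Z^TAZ_\perp)s$: the crude bound $\|Z^TAZ_\perp\|_2\le(\|E\|_2+\lambda_{max}(\Lambda_\perp))\tan\theta$ coming from (\ref{eq:Z_perpAZ}) is too lossy, as it would pollute $\varepsilon_C$ with $\|E^{-1}\|_2\|E\|_2$ instead of the clean constant $1$. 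Instead I would substitute the exact (transposed) identity (\ref{eq:Z_perpAZ}) for $Z_\perp^TAZ$ into the cross term, giving
$$t^TE^{-1}(Z^TAZ_\perp)s=t^TE^{-1}(Z^TV_\perp)\Lambda_\perp(V_\perp^TZ_\perp)^{-T}s-t^T(Z^TV_\perp)(V_\perp^TZ_\perp)^{-T}s,$$
where the cancellation $E^{-1}E=I$ removes $\|E\|_2$ from the second summand. With $\|Z^TV_\perp\|_2=\sin\theta$ and $\|(V_\perp^TZ_\perp)^{-1}\|_2=1/\cos\theta$ from Lemma \ref{le:angle}, this cross term is bounded by $\|t\|_2\|s\|_2(\lambda_{max}(\Lambda_\perp)\|E^{-1}\|_2+1)\tan\theta\le\frac{1}{2}(\lambda_{max}(\Lambda_\perp)\|E^{-1}\|_2+1)\tan\theta$. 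Summing the four bounds yields $|D|\le\varepsilon_C$, hence $|x^TP_CAx-x^T\tilde P_CAx|\le\varepsilon_C$ for all unit $x$. Finally, for each eigenvalue $\mu=x^TP_CAx$ of $P_CA$ I combine this with the symmetric bounds $\lambda_{min}(\tilde P_CA)\le x^T\tilde P_CAx\le\lambda_{max}(\tilde P_CA)$ to obtain the stated two-sided estimate.
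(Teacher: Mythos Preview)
Your proposal is correct and follows essentially the same route as the paper: both arguments reduce to bounding $x^TP_CAx-x^T\tilde P_CAx$ by splitting $x=Zt+Z_\perp s$, both obtain the same four-term decomposition of the difference, and both handle the critical cross term by substituting the exact identity (\ref{eq:Z_perpAZ}) (the paper multiplies $Z_\perp^TAZ$ by $E^{-1}$ on the right, you transpose and multiply on the left) so that the $E^{-1}E$ cancellation yields the constant $1$ rather than $\|E^{-1}\|_2\|E\|_2$. The only cosmetic difference is that the paper establishes reality and positivity of the spectrum via the similarity $A^{1/2}P_CA^{1/2}$ whereas you use $P_C^{1/2}AP_C^{1/2}$; both are valid and lead to the same numerical-range conclusion.
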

\begin{proof}
$P_CA$ is similar to $A+A^{1/2}ZE^{-1}Z^TA^{1/2}$ since $A$ is SPD. Moreover $A+A^{1/2}ZE^{-1}Z^TA^{1/2}$ is SPD as well since $A$ and $E$ are SPD. Thus the eigenvalues of $P_CA$ are positive.

For all unit 2-norm $x\in\mathbb{R}^n$, we write $x=x_1+x_2$, where $x_1\in\mathcal{Z}$ and $x_2\in\mathcal{Z_\perp}$. There exists $t\in\mathbb{R}^r$ such that $x_1=Zt$, likewise, there is $s\in\mathbb{R}^{n-r}$ such that $x_2=Z_\perp s$. 

Then $x^TP_CAx$ can be expressed as follows
\begin{eqnarray}\label{eq:xP_CAx}
x^TP_CAx &=& x^TAx+(x_1+x_2)^TZE^{-1}Z^TA(x_1+x_2)\\
         &=& x^TAx+x^T_1ZE^{-1}Z^TAx_1+x^T_1ZE^{-1}Z^TAx_2\nonumber\\
         &=& x^TAx+x^T_1x_1+x^T_1ZE^{-1}Z^TAx_2.\nonumber
\end{eqnarray}
From the definition of $\tilde P_C$ in (\ref{th:exact_coarse_space}), we obtain
\begin{eqnarray}\label{eq:xtP_CAx}
x^T\tilde P_CAx &=& x^TAx+x^TV\tilde E^{-1}V^TAx\\
                &=& x^TAx+x^TVV^Tx.\nonumber
\end{eqnarray}
Subtract (\ref{eq:xtP_CAx}) from (\ref{eq:xP_CAx}), we have
\begin{eqnarray}\label{eq:P_CA:eigenvalue}
x^TP_CAx-x^T\tilde P_CAx &=& x^T_1ZE^{-1}Z^TAx_2+x^T_1x_1-x^TVV^Tx\\
                         &=& x^T_1ZE^{-1}Z^TAx_2+x^T_1V_\perp V^T_\perp x_1\nonumber\\
                         & & -x^T_2VV^Tx_2-2x^T_1VV^Tx_2.\nonumber
\end{eqnarray}
Since both $A$ and $E^{-1}$ are symmetric,
$$x^T_1ZE^{-1}Z^TAx_2=x^T_2AZE^{-1}Z^Tx_1=s^T_2Z^T_\perp AZE^{-1}t.$$
Using (\ref{eq:Z_perpAZ}), we have
\begin{eqnarray*}
(Z^T_\perp AZ)E^{-1}=(V^T_\perp Z_\perp)^{-1}\Lambda_\perp(V^T_\perp Z)E^{-1}-(V^T_\perp Z_\perp)^{-1}(V^T_\perp Z).
\end{eqnarray*}
Note that $\|x_1\|_2=\|t\|_2$ and $\|x_2\|_2=\|s\|_2$. Using Lemma \ref{le:angle}, we obtain
\begin{eqnarray}\label{eq:P_C:ZEZ_TA}
|x^T_1ZE^{-1}Z^TAx_2|\le\|x_1\|_2\|x_2\|_2(\|\Lambda_\perp\|_2\|E^{-1}\|_2+1)\tan\theta.
\end{eqnarray}
Since $\|x\|_2=1$, we have the following bounds with Lemma \ref{le:angle}
\begin{eqnarray}\label{ineq:vbounds}
0\le x^T_1V_\perp V^T_\perp x_1\le\|x_1\|^2_2\sin^2\theta,\nonumber\\
0\le x^T_2VV^Tx_2\le\|x_2\|^2_2\sin^2\theta,\\
|x^T_1VV^Tx_2|\le\|x_1\|_2\|x_2\|_2\sin\theta.\nonumber
\end{eqnarray}
With (\ref{eq:P_CA:eigenvalue}), (\ref{eq:P_C:ZEZ_TA}) and (\ref{ineq:vbounds}), we obtain
\begin{eqnarray*}
\lambda_{min}(\tilde P_CA)-\varepsilon_C \le x^TP_CAx \le \lambda_{max}(\tilde P_CA)+\varepsilon_C,
\end{eqnarray*}
where $\varepsilon_C=\frac{1}{2}(\lambda_{max}(\Lambda_\perp)\|E^{-1}\|_2+1)\tan\theta+\sin\theta+\sin^2\theta.$
Thus the theorem follows from $\min\{x^TP_CAx\}\le\lambda(P_CA)\le\max\{x^TP_CAx\}$ for all unit 2-norm $x\in\mathbb{R}^n$.
\end{proof}

From Theorem \ref{th:bo:P_CA}, we conclude that the maximal and minimal eigenvalues of $P_CA$ converge to $\max\{\lambda_{max}(\Lambda_\perp), 1+\lambda_{max}(\Lambda)\}$ and $\min\{\lambda_{min}(\Lambda_\perp), 1+\lambda_{min}(\Lambda)\}$ respectively as $\theta$ approaches zero. With an appropriate coarse space, the spectral distribution of $P_CA$ would be close to that of $\tilde P_CA$. Analogously, it can be proved that the maximal and minimal eigenvalues of $AP_C$ have the same bounds as that of $P_CA$.

\section{Inexact inverse of projection matrix}
In practice, we do not form $E^{-1}$ explicitly to compute $y=E^{-1}x$. Here $y$ and $x$ are vectors with the suitable size. Instead, we compute the LU factorization of $E$ once, then solve the two triangular linear systems to obtain $y$. But it is expensive to compute the LU factorization when the matrix $E$ is large. We therefore replace $E$ by a perturbed one that is cheaper to compute. In this seciton, we analyse how the perturbation in the projection matrix impacts the spectrum of the preconditioned matrix. Assume $\tilde H$ is an invertible matrix as an approximation to $\tilde E$ in (\ref{th:exact_coarse_space}).
\begin{theorem}\label{th:bo:tP_D:inexactE}
Let $\bar P_D=I-AV\tilde H^{-1}V^T$ and $\rho=\tilde E\tilde H^{-1}-I$. If the eigenvalues of $\bar P_DA$ are real, then
\begin{eqnarray*}
-\xi_D\le\lambda(\bar P_DA)\le\lambda_{max}(\Lambda_\perp)+\xi_D,
\end{eqnarray*}
where $\xi_D=\|\rho\|_2\|\Lambda\|_2.$
\end{theorem}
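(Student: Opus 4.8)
The plan is to reduce $\bar P_D A$ to block-diagonal form in the orthogonal eigenbasis $(V,V_\perp)$ of $A$, mimicking the exact computation in Theorem \ref{th:exact_coarse_space}, and then to read off the spectrum one block at a time. First I would evaluate the action of $\bar P_D A$ on each half of the basis. Using $AV=V\Lambda$, $V^TAV=\tilde E=\Lambda$ and $V^TV_\perp=0$, a direct computation gives
\begin{eqnarray*}
\bar P_DAV &=& V\Lambda-V\Lambda\tilde H^{-1}\Lambda=V(I-\Lambda\tilde H^{-1})\Lambda=-V\rho\Lambda,\\
\bar P_DAV_\perp &=& V_\perp\Lambda_\perp,
\end{eqnarray*}
where the key algebraic step is the identity $I-\Lambda\tilde H^{-1}=-(\tilde E\tilde H^{-1}-I)=-\rho$, valid because $\tilde E=\Lambda$. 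Since the cross terms $V^TV_\perp$ and $V^T_\perp V$ vanish, conjugating by the orthogonal matrix $(V,V_\perp)$ produces
$$(V,V_\perp)^T\bar P_DA(V,V_\perp)=\left(\begin{array}{cc}-\rho\Lambda & 0\\ 0 & \Lambda_\perp\end{array}\right).$$

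This is a similarity transformation, so the spectrum of $\bar P_DA$ is the union of the spectrum of $-\rho\Lambda$ with that of $\Lambda_\perp$. The second block contributes the unchanged eigenvalues $\lambda_{r+1},\dots,\lambda_n$, all of which lie in $[\lambda_{min}(\Lambda_\perp),\lambda_{max}(\Lambda_\perp)]$ and hence already satisfy the claimed bounds. For the first block I would use that the spectral radius is controlled by any induced norm: every eigenvalue $\mu$ of $-\rho\Lambda$ obeys $|\mu|\le\|\rho\Lambda\|_2\le\|\rho\|_2\|\Lambda\|_2=\xi_D$. Invoking the hypothesis that the eigenvalues of $\bar P_DA$ (and therefore those of $-\rho\Lambda$) are real then upgrades this to $-\xi_D\le\mu\le\xi_D$. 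Combining the two blocks, the largest eigenvalue is at most $\max\{\xi_D,\lambda_{max}(\Lambda_\perp)\}\le\lambda_{max}(\Lambda_\perp)+\xi_D$, while the smallest is at least $\min\{-\xi_D,\lambda_{min}(\Lambda_\perp)\}=-\xi_D$ because $\Lambda_\perp$ is positive definite; these are exactly the two stated inequalities.

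The step requiring the most care is the treatment of the perturbed block $-\rho\Lambda$. It is generally nonsymmetric, so its eigenvalues need not be real and a priori only lie in a disk of radius $\xi_D$ about the origin; this is precisely why the theorem is stated conditionally on the eigenvalues of $\bar P_DA$ being real. Once that hypothesis is granted, the remainder is a routine submultiplicativity estimate, and the only genuine insight is the block-diagonal reduction together with the simplification $\Lambda-\Lambda\tilde H^{-1}\Lambda=-\rho\Lambda$ that isolates $\rho$.
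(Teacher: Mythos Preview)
Your argument is correct, but it proceeds by a genuinely different route from the paper. The paper works with the quadratic form: for every unit vector $x$ it writes
\[
x^T\bar P_DAx=x^T\tilde P_DAx-x^TV\rho\Lambda V^Tx,
\]
bounds the perturbation term by $\|\rho\|_2\|\Lambda\|_2$, and then invokes the fact that each real eigenvalue of a real matrix equals $v^TMv$ for some real unit eigenvector $v$, so the eigenvalues are trapped between $\min_x x^T\bar P_DAx$ and $\max_x x^T\bar P_DAx$. You instead block-diagonalize $\bar P_DA$ explicitly in the eigenbasis $(V,V_\perp)$, obtaining $\mathrm{diag}(-\rho\Lambda,\Lambda_\perp)$, and read off the spectrum block by block, bounding the $r\times r$ block by its spectral radius. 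Your approach actually yields a sharper statement than the theorem records: the $n-r$ eigenvalues coming from $\Lambda_\perp$ are \emph{exactly} $\lambda_{r+1},\dots,\lambda_n$, unchanged by the perturbation, and only the $r$ eigenvalues of $-\rho\Lambda$ are affected. The paper's numerical-range argument is more uniform across Theorems~\ref{th:bo:tP_D:inexactE}--\ref{th:bo:tP_A:inexactE} (and parallels the arguments in Section~\ref{se:spectrum}), while your block-diagonal reduction is more transparent here precisely because the exact coarse space $V$ is used, so the off-diagonal blocks vanish identically; it would not extend so cleanly to the approximate-coarse-space setting of Section~\ref{se:spectrum}.
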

\begin{proof}
$\|\rho\|_2$ measures how much $\tilde H$ is approximate to $\tilde E$ when $\tilde H^{-1}$ is used as the right inverse of $\tilde E$. Obviously $\|\rho\|_2=0$ if and only if $\tilde E=\tilde H$. Assume $\|x\|_2=1$ and use the definition of $\tilde P_D$ in Theorem \ref{th:exact_coarse_space}, we obtain
\begin{eqnarray*}
x^T\bar P_DAx &=& x^TAx-x^TAV\tilde H^{-1}V^TAx\\
              &=& x^TAx-x^TVV^TAx-x^TV\rho V^TAx\nonumber\\
              &=& x^T\tilde P_DAx-x^TV\rho \Lambda V^Tx.\nonumber
\end{eqnarray*}
Since $|x^TV\rho\Lambda V^Tx|\le\|\rho\|_2\|\Lambda\|_2$,
$$-\|\rho\|_2\|\Lambda\|_2\le x^T\bar P_DAx \le\lambda_{max}(\tilde P_DA)+\|\rho\|_2\|\Lambda\|_2.$$
Since the eigenvalues of $\bar P_DA$ are real, the theorem follows from $\min\{x^T\bar P_DAx\}\le\lambda(\bar P_DA)\le\max\{x^T\bar P_DAx\}$.
\end{proof}

Theorem \ref{th:bo:tP_D:inexactE} states that $\bar P_DA$ might have small eigenvalues around zero if $\|\rho\|_2\ne 0$, which leads to the worse spectral distribution than that of $\tilde P_DA$.
\begin{theorem}\label{th:bo:tP_C:inexactE}
Let $\bar P_C=I+V\tilde H^{-1}V^T$ and $\rho=\tilde H^{-1}\tilde E-I$. If the eigenvalues of $\bar P_CA$ are real, then
\begin{eqnarray*}
\lambda_{max}(\bar P_CA) &\le& \max\left\{1+\lambda_{max}(\Lambda), \lambda_{max}(\Lambda_\perp)\right\}+\xi_C,\\
\lambda_{min}(\bar P_CA) &\ge& \min\left\{1+\lambda_{min}(\Lambda), \lambda_{min}(\Lambda_\perp)\right\}-\xi_C,
\end{eqnarray*}
where $\xi_C=\|\rho\|_2$.
\end{theorem}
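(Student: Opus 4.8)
The plan is to mirror the argument of Theorem \ref{th:bo:tP_D:inexactE}: I would treat $\tilde P_C A$ as a symmetric reference operator whose spectrum is already known, and control the deviation induced by the inexact solve through the single quantity $\rho$. First I would record the two facts that drive the computation. Since the columns of $V$ are eigenvectors of $A$ we have $V^TA=\Lambda V^T$, and from the proof of Theorem \ref{th:exact_coarse_space} we know $\tilde E=V^TAV=\Lambda$. The definition $\rho=\tilde H^{-1}\tilde E-I$ is chosen exactly so that the left factor $\tilde H^{-1}$ absorbs a copy of $\tilde E$: indeed $\tilde H^{-1}V^TA=\tilde H^{-1}\tilde E V^T=(I+\rho)V^T$, and therefore $V\tilde H^{-1}V^TA=VV^T+V\rho V^T$. (This also explains why $\rho$ is the \emph{left} residual here, in contrast to the right residual $\tilde E\tilde H^{-1}-I$ used for $\bar P_D$, where the factor $\tilde H^{-1}$ sat to the right of $AV=V\tilde E$.)

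Next I would take an arbitrary unit vector $x$ and expand the quadratic form. Using $\bar P_C A=A+V\tilde H^{-1}V^TA$ together with the identity just derived and the relation $\tilde P_C A=A+VV^T$ from the proof of Theorem \ref{th:exact_coarse_space},
\begin{eqnarray*}
x^T\bar P_C A x = x^TAx+x^TVV^Tx+x^TV\rho V^Tx = x^T\tilde P_C A x+x^TV\rho V^Tx.
\end{eqnarray*}
Because $V$ has orthonormal columns, $\|V^Tx\|_2\le\|x\|_2=1$, so the perturbation term is uniformly small, $|x^TV\rho V^Tx|\le\|\rho\|_2\|V^Tx\|^2_2\le\|\rho\|_2=\xi_C$. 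I would then invoke the exact spectral data: by Theorem \ref{th:exact_coarse_space} the operator $\tilde P_C A$ is symmetric with eigenvalues $\{1+\lambda_1,\dots,1+\lambda_r,\lambda_{r+1},\dots,\lambda_n\}$, hence for every unit $x$,
\begin{eqnarray*}
\min\{1+\lambda_{min}(\Lambda),\lambda_{min}(\Lambda_\perp)\}\le x^T\tilde P_C A x\le\max\{1+\lambda_{max}(\Lambda),\lambda_{max}(\Lambda_\perp)\}.
\end{eqnarray*}
Combining the two displays gives the asserted upper and lower bounds on $x^T\bar P_C A x$ for all unit $x$, with the $\pm\xi_C$ slack.

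The last step is to transfer these bounds on the quadratic form to the actual eigenvalues, and this is the genuinely delicate point that justifies the hypothesis that the eigenvalues of $\bar P_C A$ are real. The matrix $\bar P_C A$ is not symmetric; moreover $A^{1/2}\bar P_C A\,A^{-1/2}=A+A^{1/2}V\tilde H^{-1}V^TA^{1/2}$ is symmetric only when $\tilde H^{-1}$ is symmetric, so in general $\bar P_C A$ is not even similar to a symmetric matrix and its spectrum need not be real. Granting reality, however, any eigenvalue $\lambda$ admits a real unit eigenvector $w$, whence $\lambda=w^T\bar P_C A w$ belongs to the field of values $\{x^T\bar P_C A x:\|x\|_2=1\}$; therefore $\min\{x^T\bar P_C A x\}\le\lambda(\bar P_C A)\le\max\{x^T\bar P_C A x\}$, exactly as used at the end of Theorem \ref{th:bo:tP_D:inexactE}. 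Substituting the bounds from the previous paragraph then yields the stated inequalities. I expect this localization to be the main obstacle: everything up to it is a direct symmetric-perturbation calculation, but without the reality assumption the eigenvalues cannot be confined to the real interval produced by the numerical range, and the bound would have to be recast in terms of the field of values in the complex plane.
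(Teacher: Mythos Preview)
Your proposal is correct and follows essentially the same route as the paper: you derive the identity $x^T\bar P_CAx=x^T\tilde P_CAx+x^TV\rho V^Tx$ via $V^TA=\tilde E V^T$, bound the perturbation by $\|\rho\|_2$, and then use that real eigenvalues lie in the numerical range. Your write-up is in fact more careful than the paper's in justifying why the left residual $\tilde H^{-1}\tilde E-I$ appears here and why the reality hypothesis is genuinely needed.
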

\begin{proof}
In this theorem, we use $\tilde H^{-1}$ as the left inverse of $\tilde E$. Assume $\|x\|_2=1$ and use the definition of $\tilde P_C$ in Theorem \ref{th:exact_coarse_space}, then we have
\begin{eqnarray*}
x^T\bar P_CAx &=& x^TAx+x^TV\tilde H^{-1}V^TAx\\
              &=& x^TAx+x^TV\tilde E^{-1}V^TAx+x^TV\rho V^Tx\nonumber\\
              &=& x^T\tilde P_CAx+x^TV\rho V^Tx.\nonumber
\end{eqnarray*}
Since $|x^TV\rho V^Tx|\le\|\rho\|_2$,
$$\lambda_{min}(\tilde P_CA)-\|\rho\|_2\le x^T\bar P_CAx \le\lambda_{max}(\tilde P_CA)+\|\rho\|_2.$$
Since the eigenvalues of $\bar P_CA$ are real, the theorem follows from $\min\{x^T\bar P_CAx\}\le\lambda(\bar P_CA)\le\max\{x^T\bar P_CAx\}$.
\end{proof}

Theorem \ref{th:bo:tP_C:inexactE} implies that if the eigenvalues of $\bar P_CA$ are real, the spectral distribution of $\bar P_CA$ is a little influenced with small $\|\rho\|_2$, because the maximal and minimal eigenvalues of $\bar P_CA$ converge to $\max\{\lambda_{max}(\Lambda_\perp), 1+\lambda_{max}(\Lambda)\}$ and $\min\{\lambda_{min}(\Lambda_\perp), 1+\lambda_{min}(\Lambda)\}$ respectively as $\rho$ approaches zero. 

In next theorem, we need to estimate the difference of $\tilde H$ and $\tilde E$ when $\tilde H^{-1}$ is used as both the left and the right inverse of $\tilde E$.
\begin{theorem}\label{th:bo:tP_A:inexactE}
Let $\bar P_A=I-AV\tilde H^{-1}V^T+V\tilde H^{-1}V^T$. Let $\rho_1=\tilde E\tilde H^{-1}-I$ and $\rho_2=\tilde H^{-1}\tilde E-I$. If the eigenvalues of $\bar P_AA$ are real, then
\begin{eqnarray*}
\lambda_{max}(\bar P_AA) &\le& \max\left\{1, \lambda_{max}(\Lambda_\perp)\right\}+\xi_A,\\
\lambda_{min}(\bar P_AA) &\ge& \min\left\{1, \lambda_{min}(\Lambda_\perp)\right\}-\xi_A,
\end{eqnarray*}
where $\xi_A=\|\rho_1\|_2\|\Lambda\|_2+\|\rho_2\|_2$.
\end{theorem}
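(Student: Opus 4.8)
The plan is to mimic the Rayleigh-quotient arguments of Theorems \ref{th:bo:tP_D:inexactE} and \ref{th:bo:tP_C:inexactE}, exploiting the fact that $\bar P_A$ is assembled from a deflation piece $-AV\tilde H^{-1}V^T$ and a coarse-correction piece $+V\tilde H^{-1}V^T$. Since $\tilde E=V^TAV=\Lambda$, I have the identities $AV=V\Lambda$ and $V^TA=\Lambda V^T$. For any unit $2$-norm $x$ I would first expand
\begin{eqnarray*}
x^T\bar P_AAx = x^TAx - x^TAV\tilde H^{-1}V^TAx + x^TV\tilde H^{-1}V^TAx.
\end{eqnarray*}

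I would then treat the two perturbed terms separately, reusing the computations already carried out. For the deflation term, using $AV=V\Lambda$ and $\rho_1=\Lambda\tilde H^{-1}-I$, one has $AV\tilde H^{-1}V^TA=V(I+\rho_1)\Lambda V^T$, so that $-x^TAV\tilde H^{-1}V^TAx=-x^TVV^TAx-x^TV\rho_1\Lambda V^Tx$, exactly as in the proof of Theorem \ref{th:bo:tP_D:inexactE}. For the correction term, using $\rho_2=\tilde H^{-1}\Lambda-I$, one has $V\tilde H^{-1}V^TA=V(I+\rho_2)V^T$, so that $x^TV\tilde H^{-1}V^TAx=x^TVV^Tx+x^TV\rho_2V^Tx$, exactly as in Theorem \ref{th:bo:tP_C:inexactE}. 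Collecting the unperturbed pieces and recognizing $\tilde P_AA=A-VV^TA+VV^T$, these combine into the single identity
\begin{eqnarray*}
x^T\bar P_AAx = x^T\tilde P_AAx - x^TV\rho_1\Lambda V^Tx + x^TV\rho_2V^Tx.
\end{eqnarray*}

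Next I would bound the two perturbation contributions. Since $\|V^Tx\|_2\le\|x\|_2=1$, we get $|x^TV\rho_1\Lambda V^Tx|\le\|\rho_1\|_2\|\Lambda\|_2$ and $|x^TV\rho_2V^Tx|\le\|\rho_2\|_2$, so together they are at most $\xi_A=\|\rho_1\|_2\|\Lambda\|_2+\|\rho_2\|_2$. By Theorem \ref{th:exact_coarse_space}, $\tilde P_AA$ is symmetric with spectrum $\{1,\dots,1,\lambda_{r+1},\dots,\lambda_n\}$, whence $\lambda_{max}(\tilde P_AA)=\max\{1,\lambda_{max}(\Lambda_\perp)\}$ and $\lambda_{min}(\tilde P_AA)=\min\{1,\lambda_{min}(\Lambda_\perp)\}$, and its Rayleigh quotient $x^T\tilde P_AAx$ lies between these two values. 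Combining yields
$$\min\{1,\lambda_{min}(\Lambda_\perp)\}-\xi_A \le x^T\bar P_AAx \le \max\{1,\lambda_{max}(\Lambda_\perp)\}+\xi_A$$
for every unit $2$-norm $x$.

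Finally, I would close the argument precisely as in the two preceding theorems: because $\bar P_AA$ is a real matrix whose eigenvalues are assumed real, any such eigenvalue admits a real eigenvector and therefore coincides with the Rayleigh quotient $\hat x^T\bar P_AA\hat x$ at its normalized eigenvector, so $\min\{x^T\bar P_AAx\}\le\lambda(\bar P_AA)\le\max\{x^T\bar P_AAx\}$ over unit $x$, and the stated bounds follow. The delicate point is that $\bar P_AA$ is in general nonsymmetric, so Courant--Fischer is unavailable and the real-eigenvalue hypothesis is exactly what places each eigenvalue inside the numerical range; beyond that, the only care needed is the bookkeeping that the right-inverse defect $\rho_1$ attaches to the deflation term (and so carries the extra factor $\|\Lambda\|_2$) while the left-inverse defect $\rho_2$ attaches to the correction term.
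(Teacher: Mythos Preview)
Your proposal is correct and follows essentially the same route as the paper: you derive the identity $x^T\bar P_AAx = x^T\tilde P_AAx - x^TV\rho_1\Lambda V^Tx + x^TV\rho_2V^Tx$, bound the two perturbation terms by $\|\rho_1\|_2\|\Lambda\|_2$ and $\|\rho_2\|_2$, and then invoke the real-eigenvalue hypothesis to trap each eigenvalue in the numerical range. If anything, you are slightly more careful than the paper in justifying the last step (explicitly noting that a real eigenvalue of a real matrix admits a real eigenvector, so the Rayleigh-quotient bound applies).
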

\begin{proof}
Assume $\|x\|_2=1$ and use the definition of $\tilde P_A$ in (\ref{th:exact_coarse_space}), we have
\begin{eqnarray*}
x^T\bar P_AAx &=& x^TAx-x^TAV\tilde H^{-1}V^TAx+x^TV\tilde H^{-1}V^TAx\\
              &=& x^TAx-x^TV\rho_1\Lambda V^Tx+x^TV\rho_2V^Tx\\
              & & -x^TAV\tilde E^{-1}V^TAx+x^TV\tilde E^{-1}V^TAx\\
              &=& x^T\tilde P_AAx-x^TV\rho_1\Lambda V^Tx+x^TV\rho_2V^Tx.
\end{eqnarray*}
Since $|x^TV\rho_1\Lambda V^Tx-x^TV\rho_2V^Tx|\le\|\rho_1\|_2\|\Lambda\|_2+\|\rho_2\|_2$,
$$\lambda_{min}(\tilde P_AA)-\|\rho_1\|_2\|\Lambda\|_2-\|\rho_2\|_2\le x^T\bar P_AAx\le\lambda_{max}(\tilde P_AA)+\|\rho_1\|_2\|\Lambda\|_2+\|\rho_2\|_2 .$$
We assumed that the eigenvalues of $\bar P_AA$ are real. Thus the theorem follows from $\min\{x^T\bar P_AAx\}\le\lambda(\bar P_AA)\le\max\{x^T\bar P_AAx\}$.
\end{proof}
\section{Numerical experiments}
In this section, a numerical comparison of various preconditioners is reported. All tests are performed with Matlab (R2010b) on an Intel Core2 Duo E7500, 2.93GHz processor with 4Gb memory. Except for the deflation preconditioner, the system preconditioned by the coarse correction and adapted deflation preconditioners are not necessarily symmetric although $A$ is SPD. Therefore we apply GMRES \cite{Saad} to solve the preconditioned system iteratively. In addition, we apply Gram-Schmidt method with reorthogonalization to maintain the orthogonality of basis of the Krylov subspace \cite{Gene, Saad}.
\subsection{Diagonal matrix}
The first test case is a diagonal matrix with entries $10^{-7}$, $10^{-6}$, $\cdots$, $10^{-1}$, 1, 10, 10.1, 10.2, $\cdots$, 209, 209.1. The matrix has 7 small eigenvalues less than 1 to be removed. The eigenvectors associated with these eigenvalues are the unit vectors, i.e., $V=(e_1,e_2,\dots,e_7)$, where $e_i$ is the $i$th column of the identity matrix. The right-hand side is a vector of all ones. The initial guess vector is a zero vector. All tests are required to reduce the relative residual below $10^{-12}$. GMRES method without preconditioning converges at 273th iteration. The perturbations in the coarse space and the projection matrix are generated by the Matlab function rand.

Table \ref{ta:distance} shows the distance between the exact coarse space and the coarse space with various perturbations. It should be noted that it is expensive and unnecessary in practice to compute $\sin\theta$ by Lemma \ref{le:angle} for a general linear system. Assume that $(\tilde\lambda_i, \tilde v_i)$ $(i=1,\cdots,r)$ are Ritz pairs that are extracted from the perturbed coarse space by the Rayleigh-Ritz procedure \cite{MatrixAlgII}. In general, $\max_{i=1,\cdots,r}\{\|A\tilde v_i-\tilde\lambda_i\tilde v_i\|_2\}$ denoted by $res_{max}$ in Table \ref{ta:distance} decreases as the two subspaces approach each other. So we can use $res_{max}$ to measure the distance between the two subspaces since it is more convenient to compute.

\begin{table}[htbp!]
\caption{The distance between span\{$V$\} and span\{$V+rand/\varepsilon$\}.}
\centering\small
\begin{tabular}{|c|c|c|c|c|c|}\hline
& $\varepsilon=1e+01$ & $\varepsilon=1e+02$ & $\varepsilon=1e+03$ & $\varepsilon=1e+04$ & $\varepsilon=1e+05$ \\ \hline
$\sin\theta$ & 9.77e-01 & 5.06e-01 & 6.03e-02 & 6.05e-03 & 6.02e-04 \\ \hline
$res_{max}$ & 7.08e+01 & 5.54e+01 & 7.33     & 5.78e-01 & 4.43e-02 \\ \hline
\end{tabular}\label{ta:distance}
\end{table}

In Table \ref{ta:numberiteration-Z}, the second column shows the number of GMRES iterations with the three preconditioners in the case that there is no perturbation in the coarse space and the projection matrix. The columns 3-7 show the number of GMRES iterations when only the coarse space has some perturbation. As is shown, all preconditioners suffer from the perturbation if it is large (see the third column). As the perturbation decreases, $P_C$ and $P_A$ become better, whereas $P_D$ becomes better only when the perturbation is very small. On the other hand, if the perturbed coarse space is close enough to the exact one (see the last two columns), $P_D$ is slightly more efficient than $P_A$, and both of them are more efficient than $P_D$.

\begin{table}[htbp!]
\caption{The number of GMRES iterations with various preconditioners that are constructed with $Z=V+rand/\varepsilon$ and $E^{-1}$.}
\centering\small
\begin{tabular}{|c|c|c|c|c|c|c|}\hline
        & $V$, $\tilde E^{-1}$ & $\varepsilon=1e+01$ & $\varepsilon=1e+02$ & $\varepsilon=1e+03$ & $\varepsilon=1e+04$ & $\varepsilon=1e+05$ \\ \hline
 $P_D$  & 71  & $>$300        & $>$300          & $>$300          & 109             & 88  \\ \hline
 $P_C$  & 104 & 273           & 267             & 222             & 173             & 144 \\ \hline
 $P_A$  & 72  & 290           & 231             & 167             & 117             & 96  \\ \hline
\end{tabular}\label{ta:numberiteration-Z}
\end{table}

Figure \ref{fig:diagonal-spectrum-V1e3} and Figure \ref{fig:diagonal-spectrum-V1e5} show the eigenvalue distribution of the system preconditioned by the three preconditioners. As discussed in Section \ref{se:spectrum}, because of rounding error, $P_DA$ has some tiny eigenvalues around zero. In general, it is difficult to figure out the condition under which these tiny eigenvalues cause the stagnation in the convergence. For the test case of diagonal matrix, rounding error does not impact the convergence of $P_DA$ when the perturbation in the coarse space is significantly small. We also see that the spectrum of $P_AA$ is more clustered than that of $P_CA$, which is consistent with the estimated bounds of their spectrum described in Theorem \ref{th:bo:P_AA} and Theorem \ref{th:bo:P_CA}.

\begin{figure}[htbp!]
\begin{minipage}[t]{0.49\linewidth}
\centering
\includegraphics[width=\textwidth]{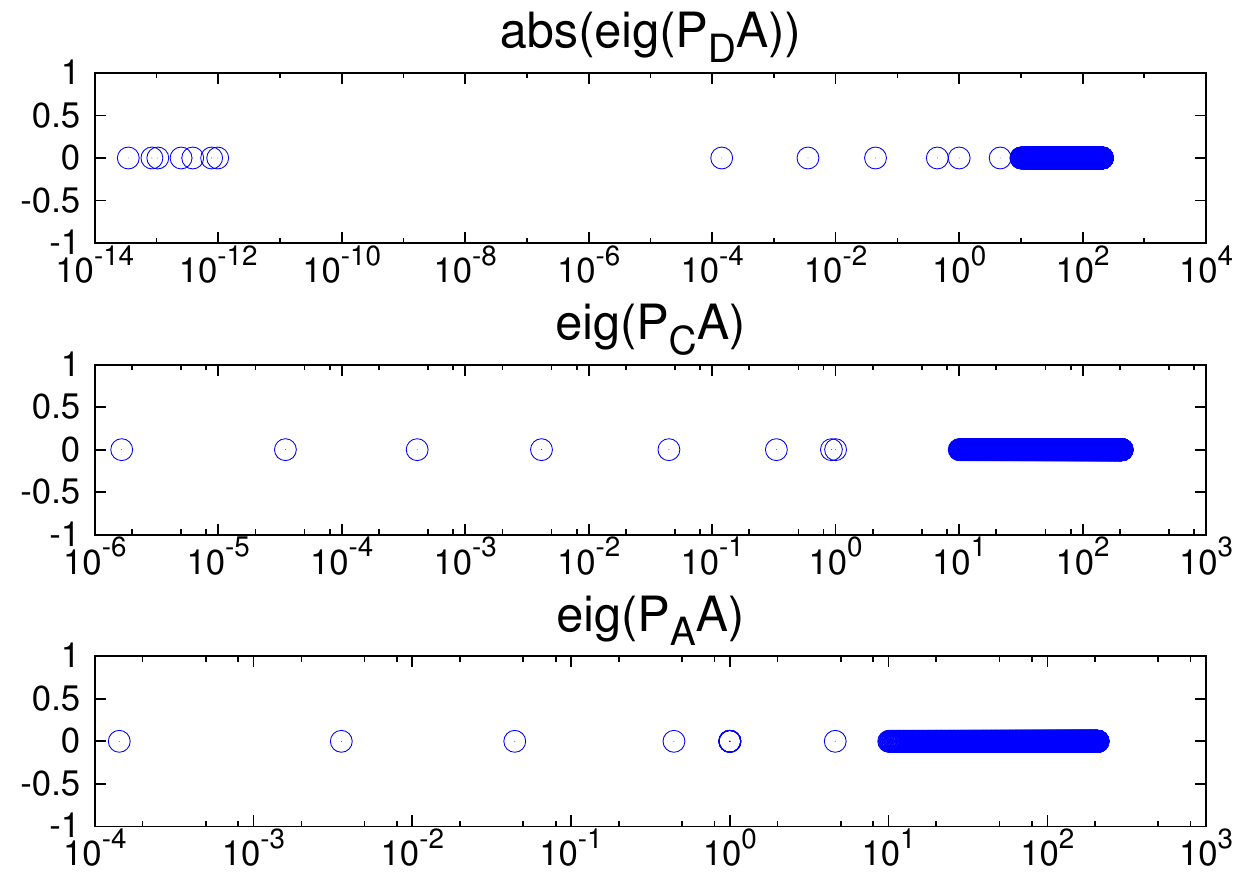}
\caption{The eigenvalue distribution of the preconditioned system. The preconditioners $P_D$, $P_C$ and $P_A$ are built with $Z=V+rand/1e+03$ and $E^{-1}$.}
\label{fig:diagonal-spectrum-V1e3}
\end{minipage}
\hfill
\begin{minipage}[t]{0.49\linewidth}
\centering
\includegraphics[width=\textwidth]{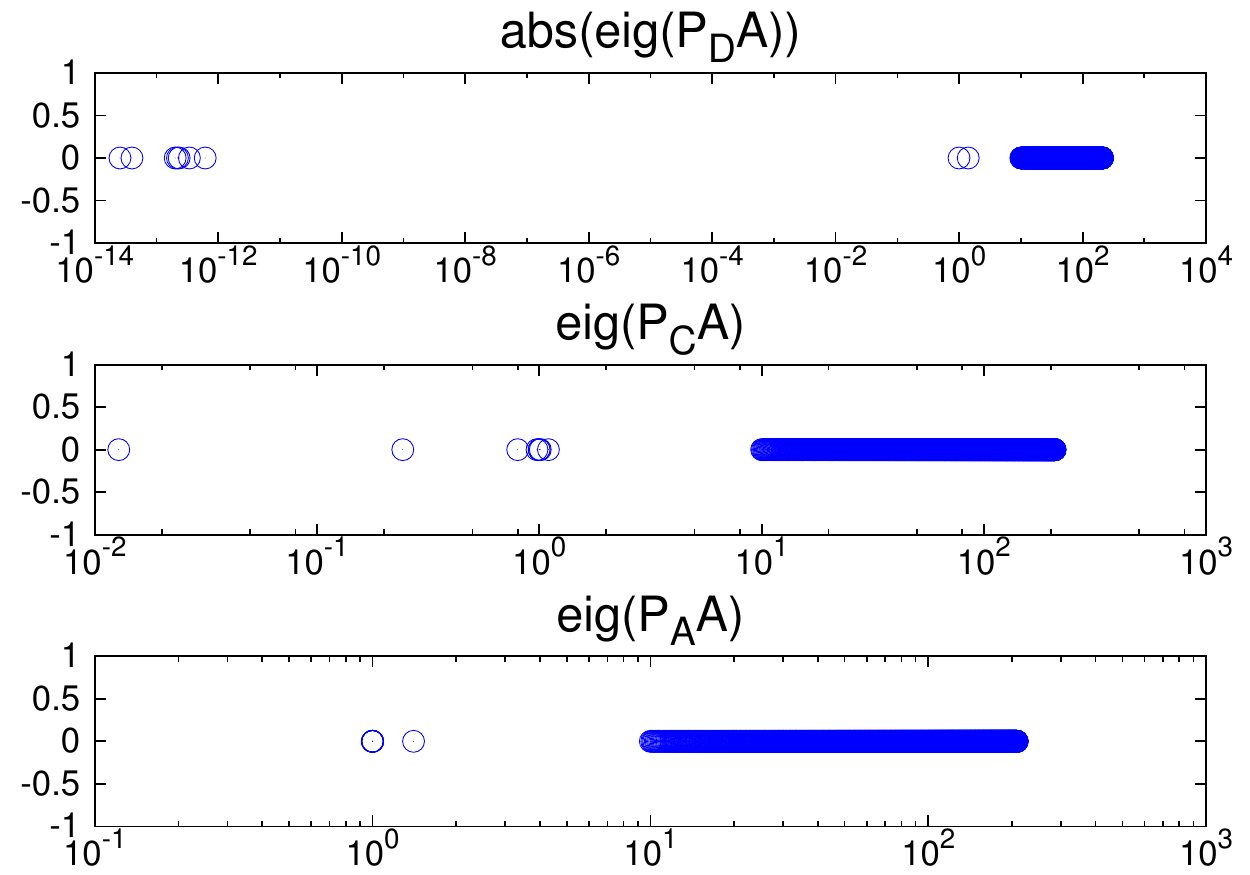}
\caption{The eigenvalue distribution of the preconditioned system. The preconditioners $P_D$, $P_C$ and $P_A$ are built with $Z=V+rand/1e+05$ and $E^{-1}$.}
\label{fig:diagonal-spectrum-V1e5}
\end{minipage}
\end{figure}

In Table \ref{ta:numberiteration-E}, the first three rows show the number of GMRES iterations when only the projection matrix has perturbation. The difference of $\tilde H^{-1}$ and $\tilde E^{-1}$ is reported in the last two rows. In comparison with the second column in Table \ref{ta:numberiteration-Z}, we conclude the perturbation in the projection matrix has a little impact on $P_C$ and $P_A$, but has a severe impact on $P_D$ even when $\tilde H^{-1}$ is very close to $\tilde E^{-1}$ (see the last column).

\begin{table}[htbp!]
\caption{The number of GMRES iterations with different preconditioners, where only the projection matrix is perturbed and the perturbation of $\tilde E$ is $\tilde H=\tilde E+rand/\varepsilon$.}
\centering\small
\begin{tabular}{|c|c|c|c|c|}\hline
        & $\varepsilon=1e+10$ & $\varepsilon=1e+12$ & $\varepsilon=1e+14$ & $\varepsilon=1e+16$ \\ \hline
 $P_D$  & $>$300        & $>$300          & $>$300          & $>$300    \\ \hline
 $P_C$  & 111           & 104             & 104             & 104    \\ \hline
 $P_A$  & 88            & 88              & 80              & 80     \\ \hline
 $\|\tilde H^{-1}\tilde E-I\|_2$ & 1.68e-03  & 1.08e-05 & 1.77e-07 & 1.23e-09 \\ \hline
 $\|\tilde E\tilde H^{-1}-I\|_2$ & 1.68e-03  & 1.08e-05 & 1.77e-07 & 1.23e-09 \\ \hline
\end{tabular}\label{ta:numberiteration-E}
\end{table}

Figure \ref{fig:diagonal-perturbation-E1e12} and Figure \ref{fig:diagonal-perturbation-E1e16} report the spectral distribution of $P_D$, $P_C$ and $P_A$. We see that $P_C$ and $P_A$ successfully shift the small eigenvalues of $A$ to around one. Due to the perturbation in the projection matrix, $P_D$ fails to deflate the small eigenvalues and thus yields some tiny eigenvalues around zero, which leads to a worse convergence (see the first row in Table \ref{ta:numberiteration-E}).

\begin{figure}[htbp!]
\begin{minipage}{0.49\linewidth}
\centering
\includegraphics[width=\textwidth]{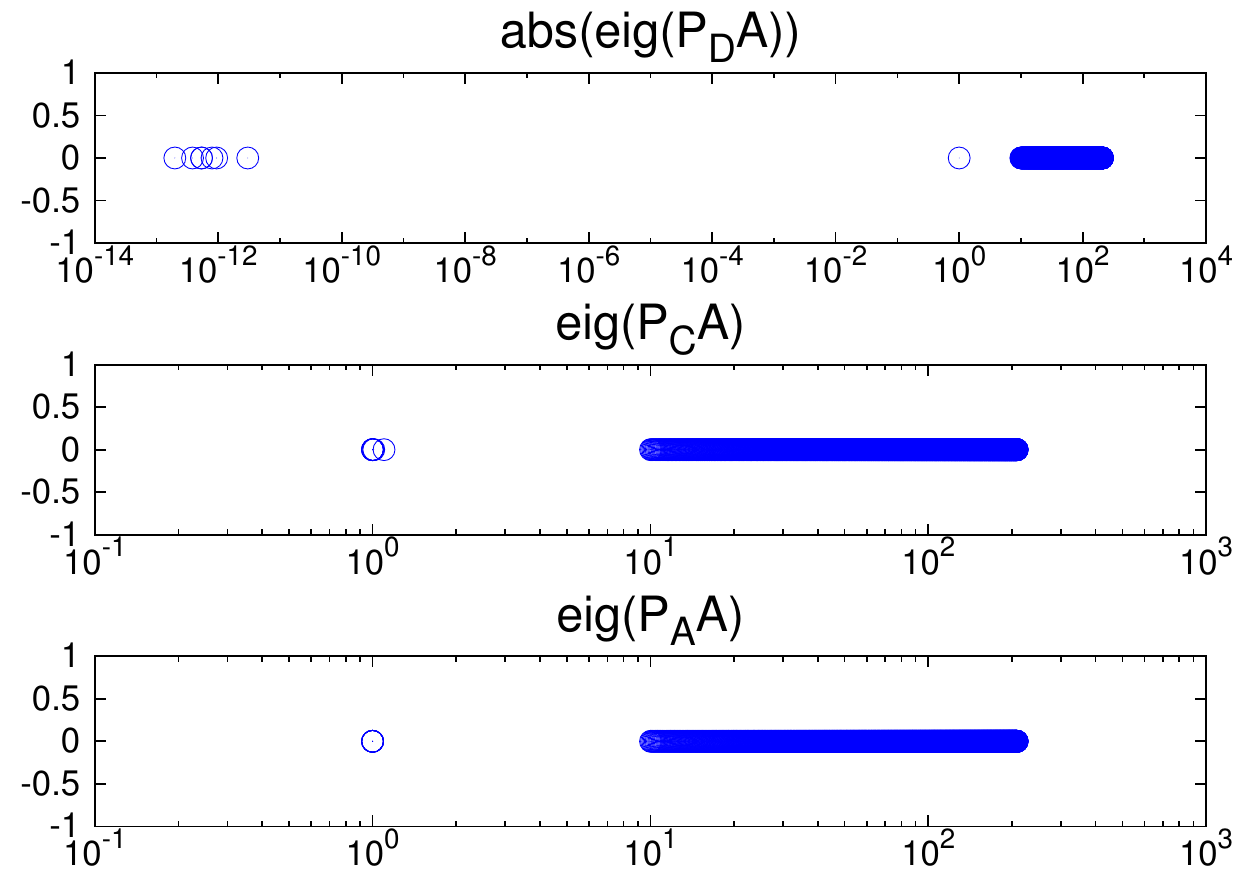}
\caption{The eigenvalue distribution of the preconditioned system, where the preconditioners $P_D$, $P_C$ and $P_A$ are built with $V$ and $\tilde H=\tilde E+rand/1e+12$.}
\label{fig:diagonal-perturbation-E1e12}
\end{minipage}
\hfill
\begin{minipage}{0.49\linewidth}
\centering
\includegraphics[width=\textwidth]{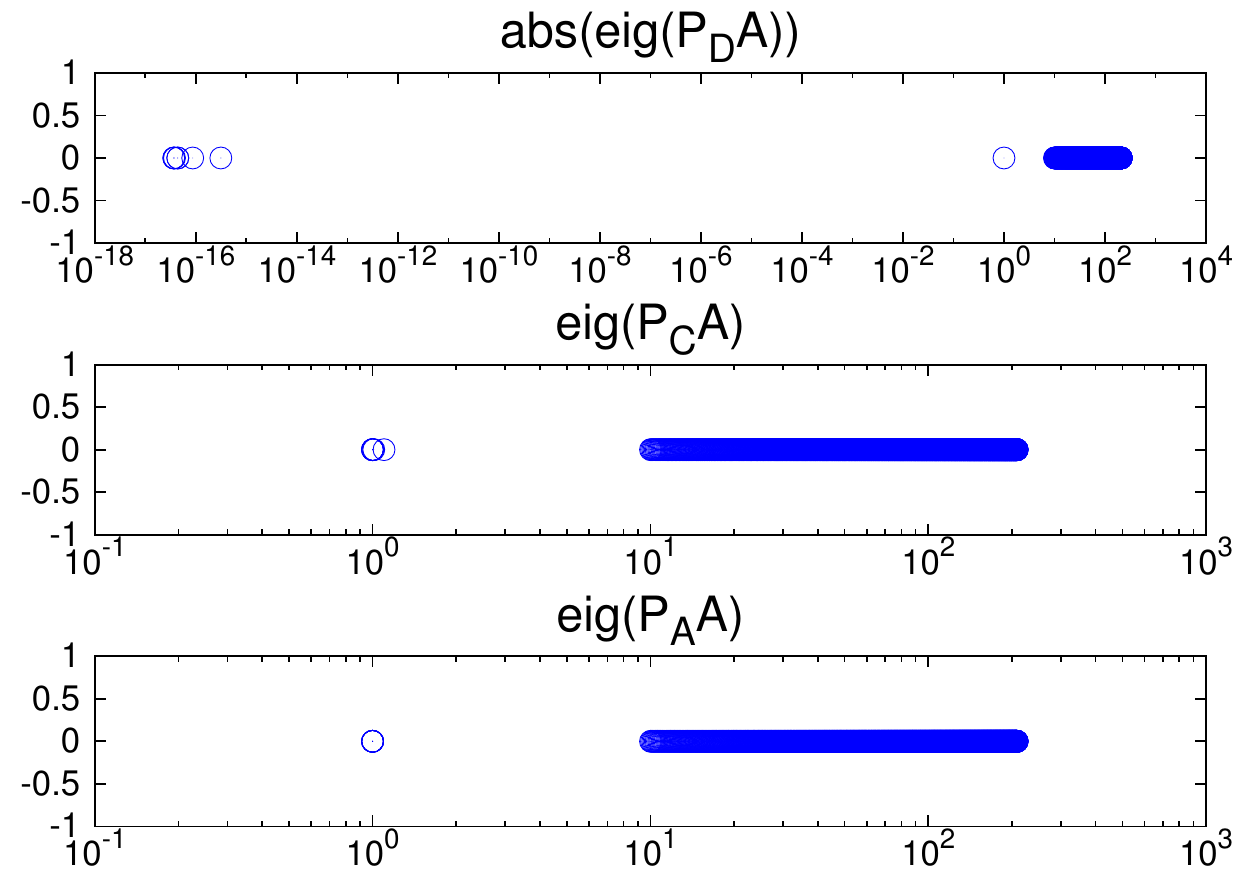}
\caption{The eigenvalue distribution of the preconditioned system, where the preconditioners $P_D$, $P_C$ and $P_A$ are built with $V$ and $\tilde H=\tilde E+rand/1e+16$.}
\label{fig:diagonal-perturbation-E1e16}
\end{minipage}
\end{figure}

\subsection{Boundary value problem}
We solve the following model problem
\begin{eqnarray*}
-\nabla\cdot(\kappa\nabla u) &=& f~~\mbox{in}~~\Omega=[0,1]^2,\\
u &=& 0~~\mbox{on}~~\partial\Omega,
\end{eqnarray*}
by the two-level multiplicative Schwarz method \cite{NatafMikolajZhao}. Here, $\kappa$ is the diffusion function of $x$ and $y$. The model problem is discretized by FreeFem++ \cite{Hecht} and the resulting coefficient matrix is of size 10201. Tests are performed on irregular overlapping decompositions with the overlap of 2 elements. These overlapping decompositions are built by adding the immediate neighboring vertices to non-overlapping subdomain obtained by Metis \cite{Kumar}.

In the two-level multiplicative Schwarz method, the first level preconditioner is the restricted additive Schwarz preconditioner (RAS) \cite{CaiSarkins} that is responsible to remove high frequency modes of the original system, and the deflation, coarse correction and adapted deflation preconditioners are applied as the second level preconditioners that remove lower frequency ones of the system preconditioned by one-level preconditioner \cite{NatafHuaVic, NatafMikolajZhao}.

We choose Ritz vectors to span the coarse space, which are extracted from the Krylov subspace during the solve of the system preconditioned by RAS. These vectors are the approximate eigenvectors corresponding to the lower part of the spectrum of the preconditioned system. To enrich the information on lower part of the spectrum, we construct the coarse space by splitting Ritz-vectors, see \cite{NatafHuaVic, NatafMikolajZhao, TangNVE} and references therein. More precisely, let
\begin{eqnarray*}
V = \left[\begin{array}{llll}
		v_{11} & v_{12} & \cdots & v_{1,r}   \\
		v_{21} & v_{22} & \cdots & v_{2,r}   \\
		\cdots & & & \\
        v_{nparts,1} & v_{nparts,2} & \cdots & v_{nparts,r}
	\end{array}\right]
\end{eqnarray*}
store Ritz vectors columnwise, where $nparts$ is the number of subdomains and $r$ the number of Ritz vectors; let $Z_i$ store the orthogonal vectors obtained by orthogonalizing $(v_{i1}, v_{i2}, \cdots, v_{i,r})$. Then $Z$ is formed as follows
\begin{eqnarray*}
Z = \left[\begin{array}{cccc}
		Z_1    & 0      & \cdots & 0      \\
		0      & Z_2    & \cdots & 0      \\
		\vdots & \vdots & \cdots & \vdots \\
        0      & 0      & \cdots & Z_{nparts}
	\end{array}\right].
\end{eqnarray*}

Obviously, span\{$V$\} is a subspace of span\{$Z$\}; span\{$Z$\} is $nparts$ times as large as span\{$V$\}. So span\{$Z$\} might have richer information corresponding to small eigenvalues. 

Some comparisons of the three preconditioners are performed on two different configurations with highly heterogeneous coefficient $\kappa$. See \cite{NatafHuaVic} for details. Two cases are described as following:
\begin{itemize}
\item skyscraper $\kappa$: for $x$ and $y$ such that for [9x]$\equiv$0(mod 2) and [9y]$\equiv$0(mod 2), $\kappa=10^4([9y]+1)$; and $\kappa=1$ elsewhere. See Figure \ref{fig:bvp_skyscraper}.
\item continuous $\kappa$: $\kappa(x,y)=10^6/3\sin(4\pi(x+y)+0.1)$. See Figure \ref{fig:bvp_continuous}.
\end{itemize}

For both cases, we use the zero vector as the initial guess vector. The iteration will stop when the relative residual is less than $10^{-10}$. Moreover, we construct the coarse space with all the eigenvectors associated with the eigenvalues less than 0.5 against the various domain decompositions.

\begin{figure}[htbp!]
\begin{minipage}[t]{0.48\linewidth}
\centering
\includegraphics[width=\textwidth]{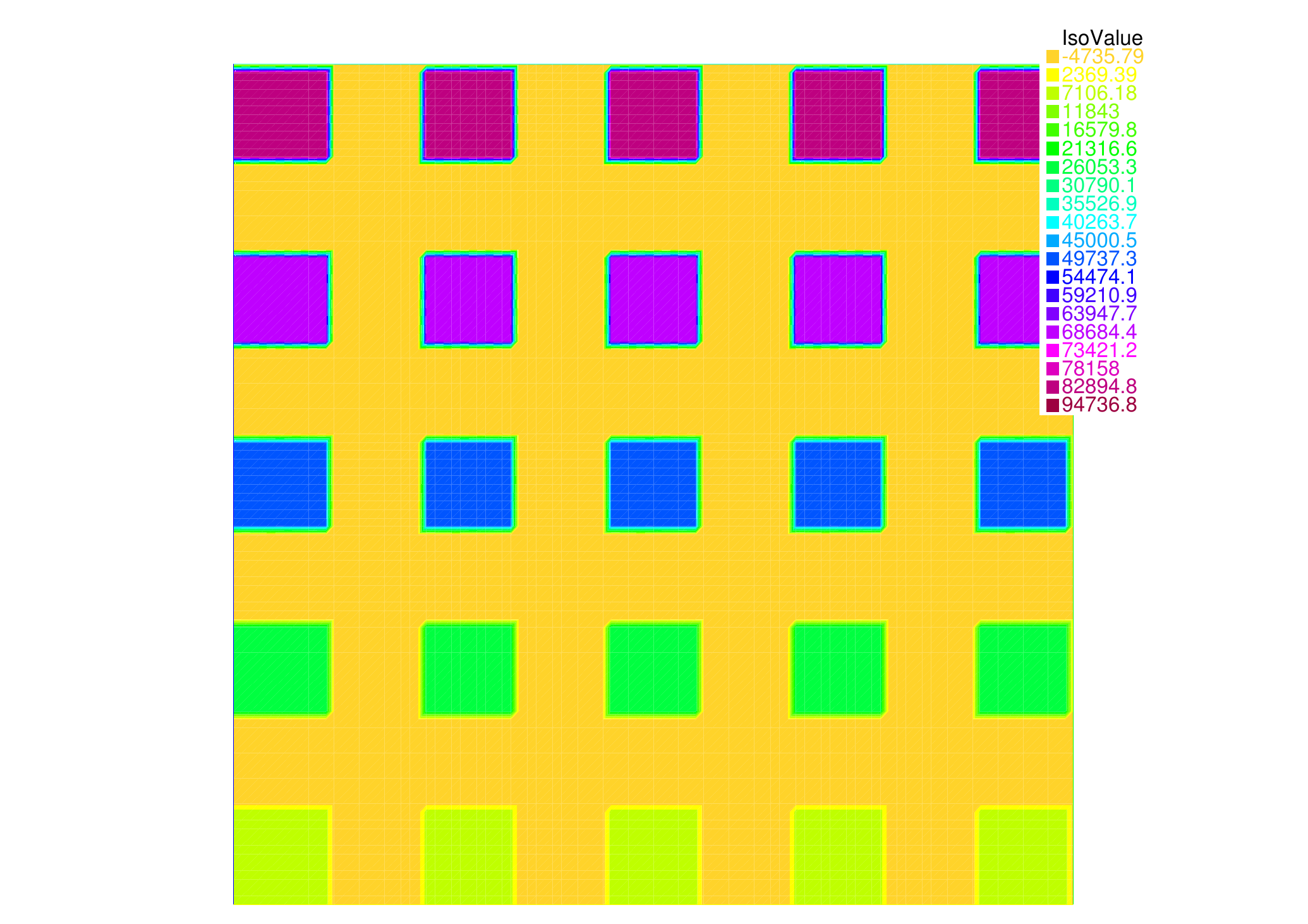}
\caption{Skyscraper case}
\label{fig:bvp_skyscraper}
\end{minipage}
\hfill
\begin{minipage}[t]{0.48\linewidth}
\centering
\includegraphics[width=\textwidth]{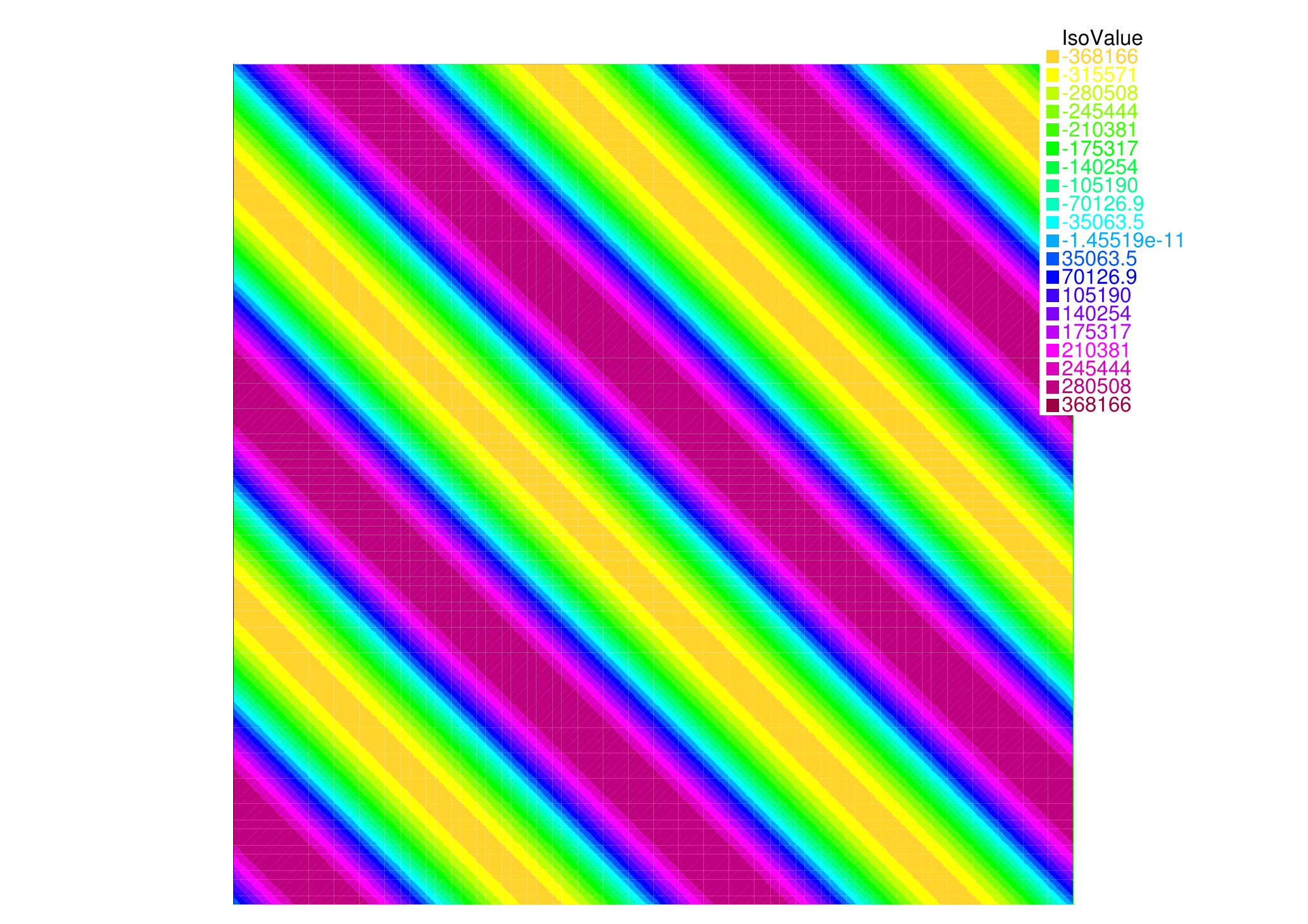}
\caption{Continuous case}
\label{fig:bvp_continuous}
\end{minipage}
\end{figure}

In Figures \ref{fig:bvp_s-Ritz14-part16}-\ref{fig:bvp_s-Ritz20-part128}, we report the convergence curves for the skycraper case against the various number of subdomains. Compared to one-level method, all three two-level methods improve convergence sufficiently. $P_D$ and $P_A$ have almost the same number of iterations although the initial residual of $P_D$ is much less than that of $P_A$. $P_D$ and $P_A$ are more efficient than $P_C$. Two-level method varies slightly on the number of iterations as the number of subdomains increases, while one-level method does.

\begin{figure}[htbp!]
\begin{minipage}[t]{0.49\linewidth}
\centering
\includegraphics[width=\textwidth]{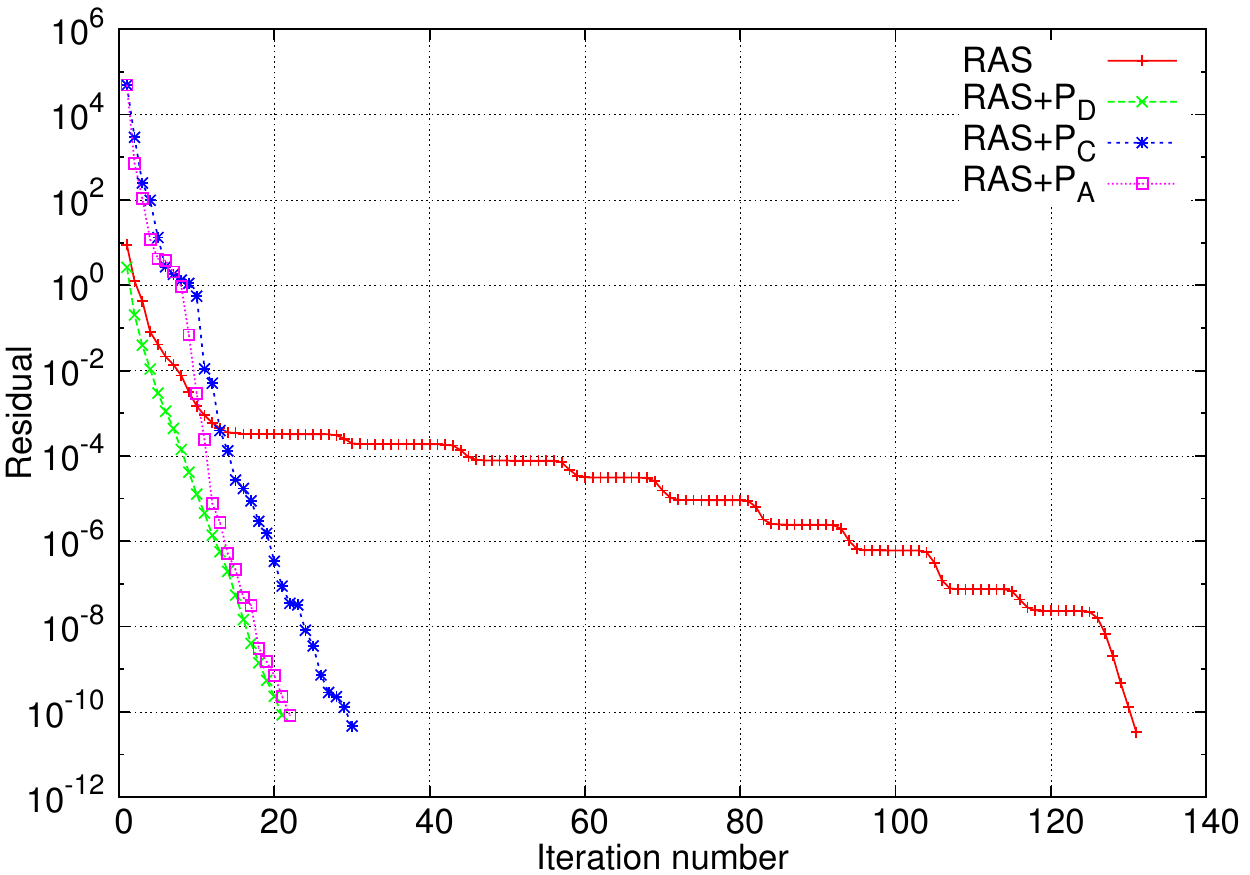}
\caption{Skyscraper case with 16 subdomains. 16 Ritz vectors spanning the coarse space.}
\label{fig:bvp_s-Ritz14-part16}
\end{minipage}
\hfill
\begin{minipage}[t]{0.49\linewidth}
\centering
\includegraphics[width=\textwidth]{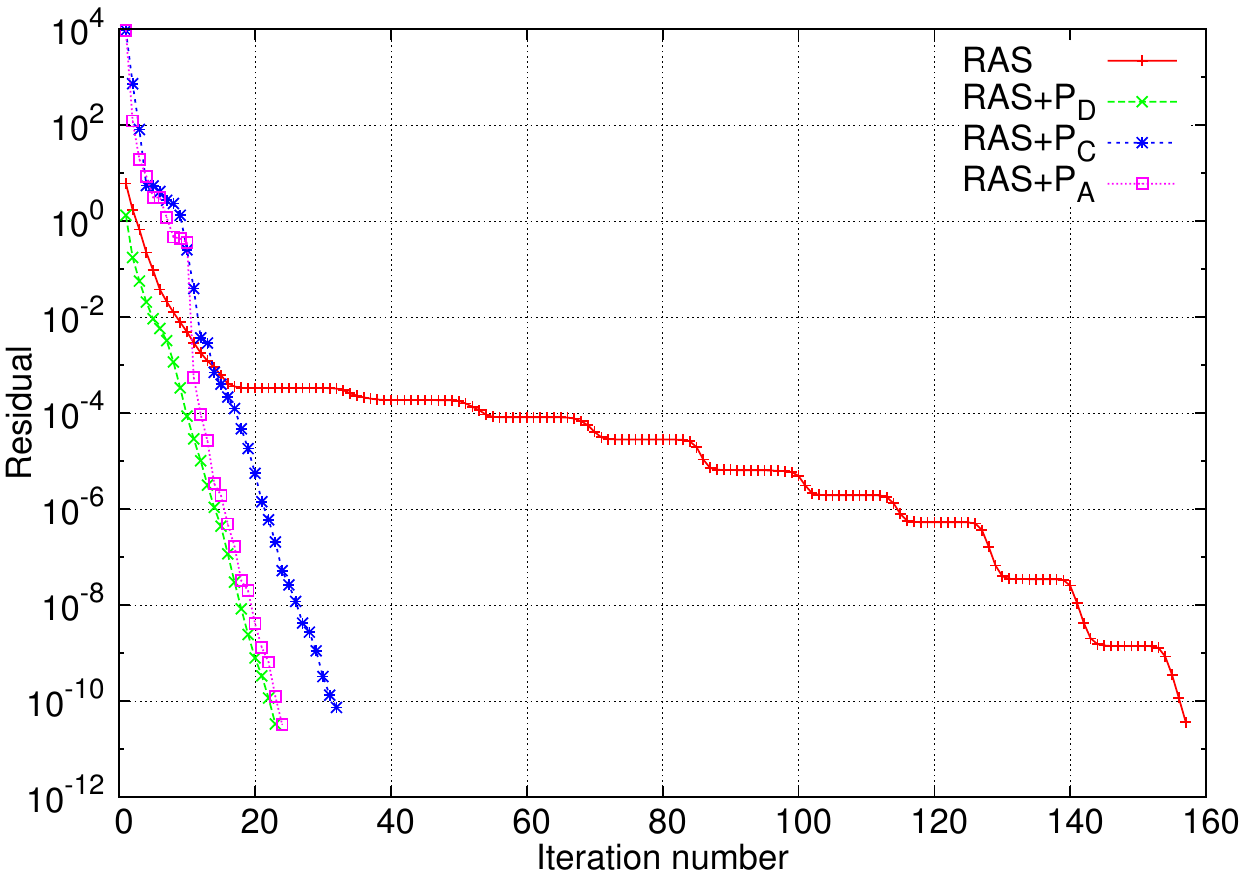}
\caption{Skyscraper case with 32 subdomains. 16 Ritz vectors spanning the coarse space.}
\label{fig:bvp_s-Ritz15-part32}
\end{minipage}
\end{figure}

\begin{figure}[htbp!]
\begin{minipage}[t]{0.49\linewidth}
\centering
\includegraphics[width=\textwidth]{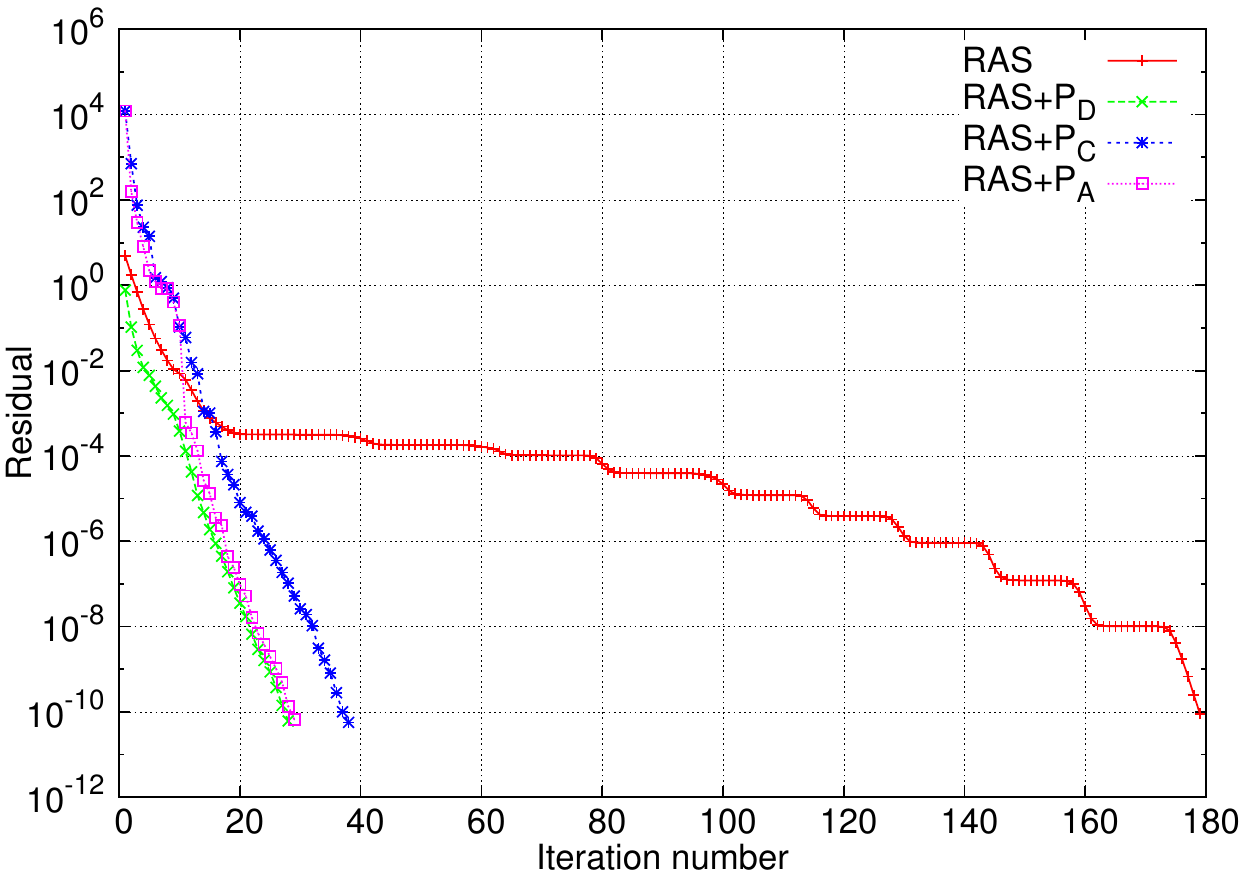}
\caption{Skyscraper case with 64 subdomains. 16 Ritz vectors spanning the coarse space.}
\label{fig:bvp_s-Ritz19-part64}
\end{minipage}
\hfill
\begin{minipage}[t]{0.49\linewidth}
\centering
\includegraphics[width=\textwidth]{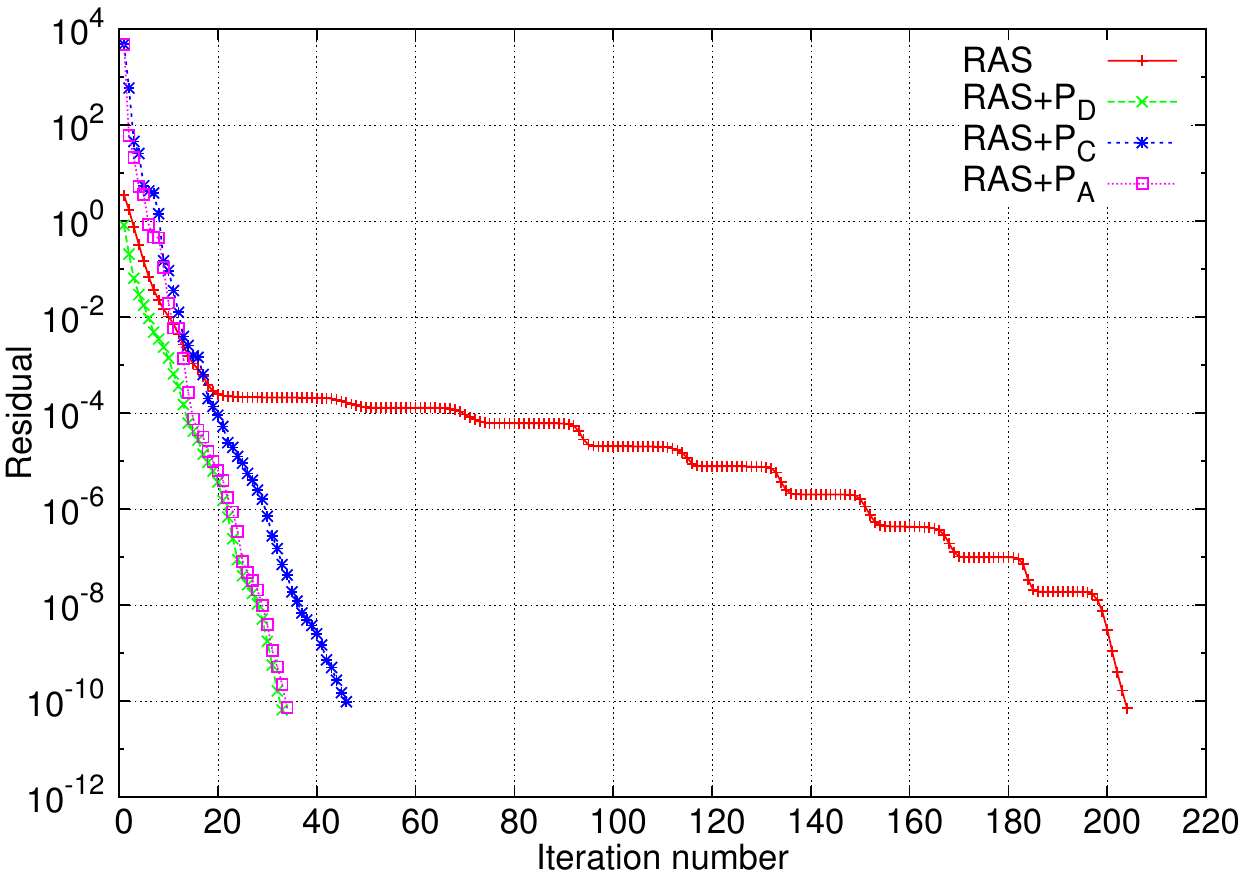}
\caption{Skyscraper case with 128 subdomains. 16 Ritz vectors spanning the coarse space.}
\label{fig:bvp_s-Ritz20-part128}
\end{minipage}
\end{figure}

Figures \ref{fig:bvp_c-Ritz16-part16}-\ref{fig:bvp_c-Ritz20-part128} plot the convergence curves for the continuous case against the various number of subdomains. Once again, we see that two-level method with RAS and the three preconditioners all outperform one-level method with only RAS. Like the skycraper case, $P_D$ and $P_A$ have almost the same number of iterations for all four decompositions and outperform $P_C$.

\begin{figure}[htbp!]
\begin{minipage}[t]{0.49\linewidth}
\centering
\includegraphics[width=\textwidth]{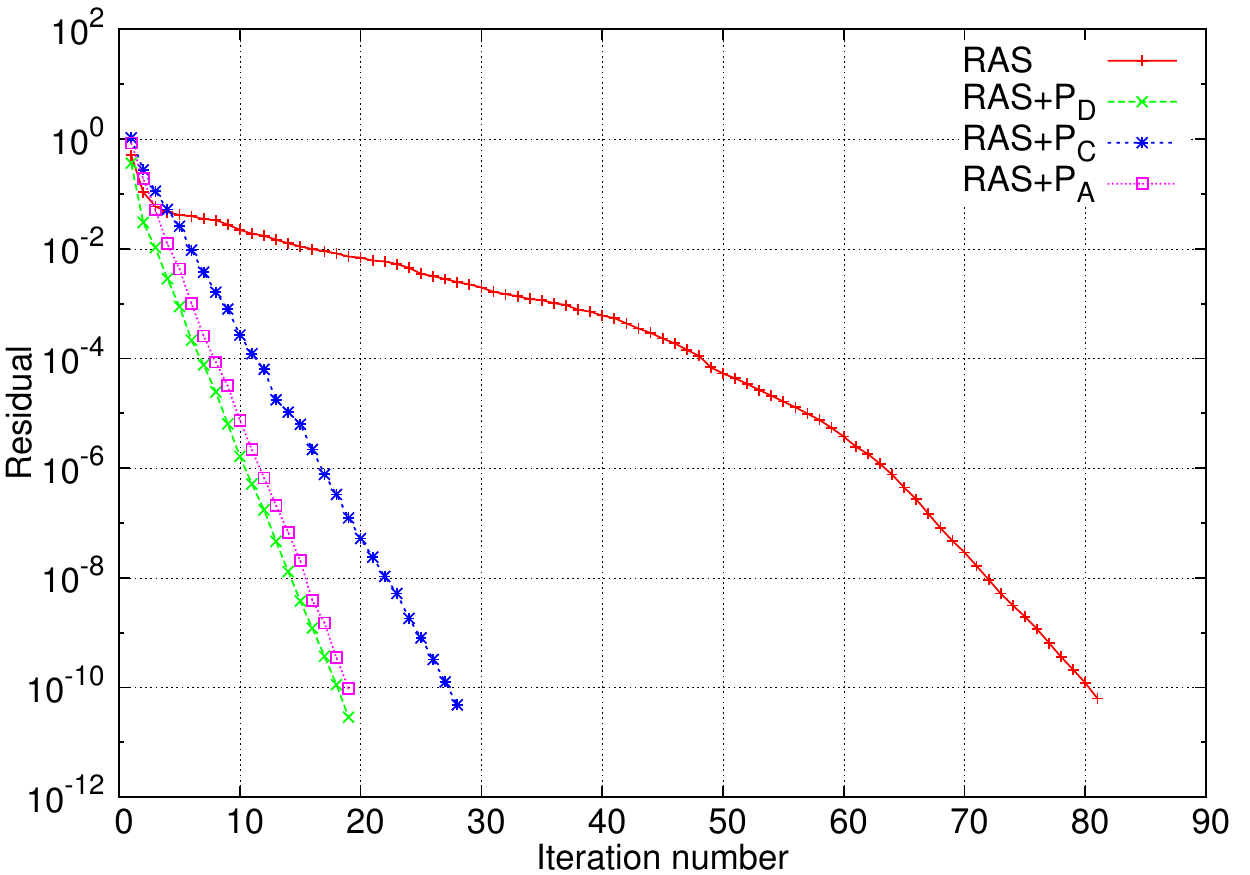}
\caption{Continuous case with 16 subdomains. 15 Ritz vectors spanning the coarse space.}
\label{fig:bvp_c-Ritz16-part16}
\end{minipage}
\hfill
\begin{minipage}[t]{0.49\linewidth}
\centering
\includegraphics[width=\textwidth]{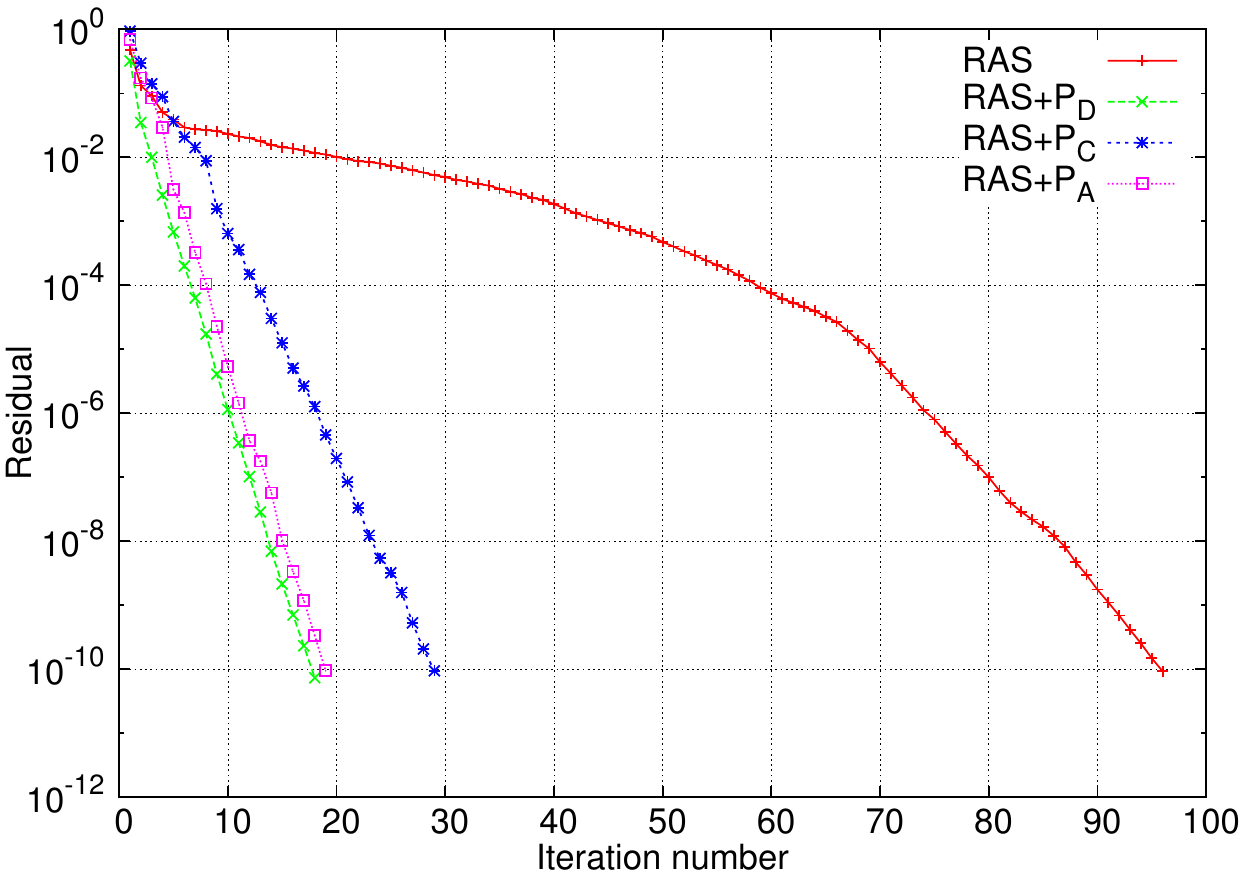}
\caption{Continuous case with 32 subdomains. 15 Ritz vectors spanning the coarse space.}
\label{fig:bvp_c-Ritz20-part32}
\end{minipage}
\end{figure}

\begin{figure}[htbp!]
\begin{minipage}[t]{0.49\linewidth}
\centering
\includegraphics[width=\textwidth]{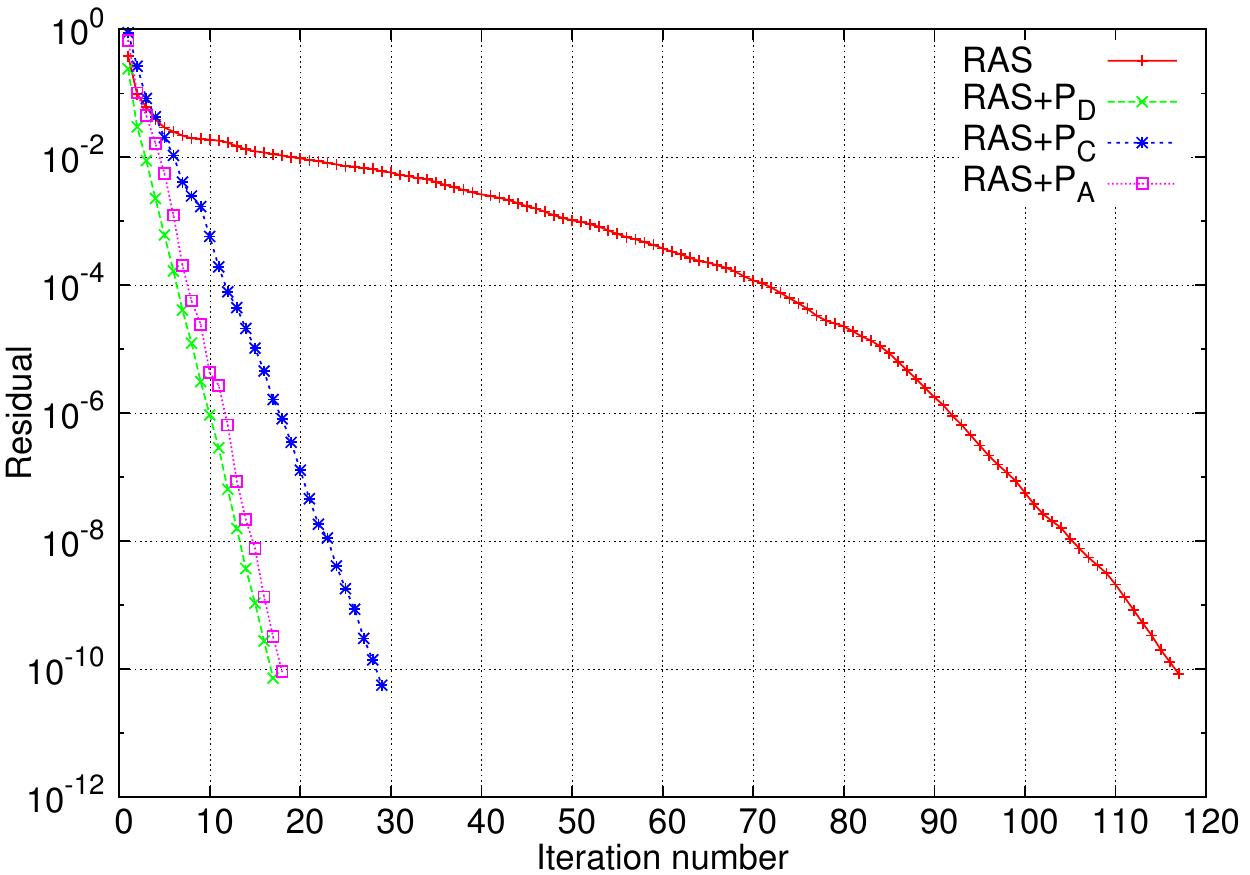}
\caption{Continuous case with 64 subdomains. 16 Ritz vectors spanning the coarse space.}
\label{fig:bvp_c-Ritz20-part64}
\end{minipage}
\hfill
\begin{minipage}[t]{0.49\linewidth}
\centering
\includegraphics[width=\textwidth]{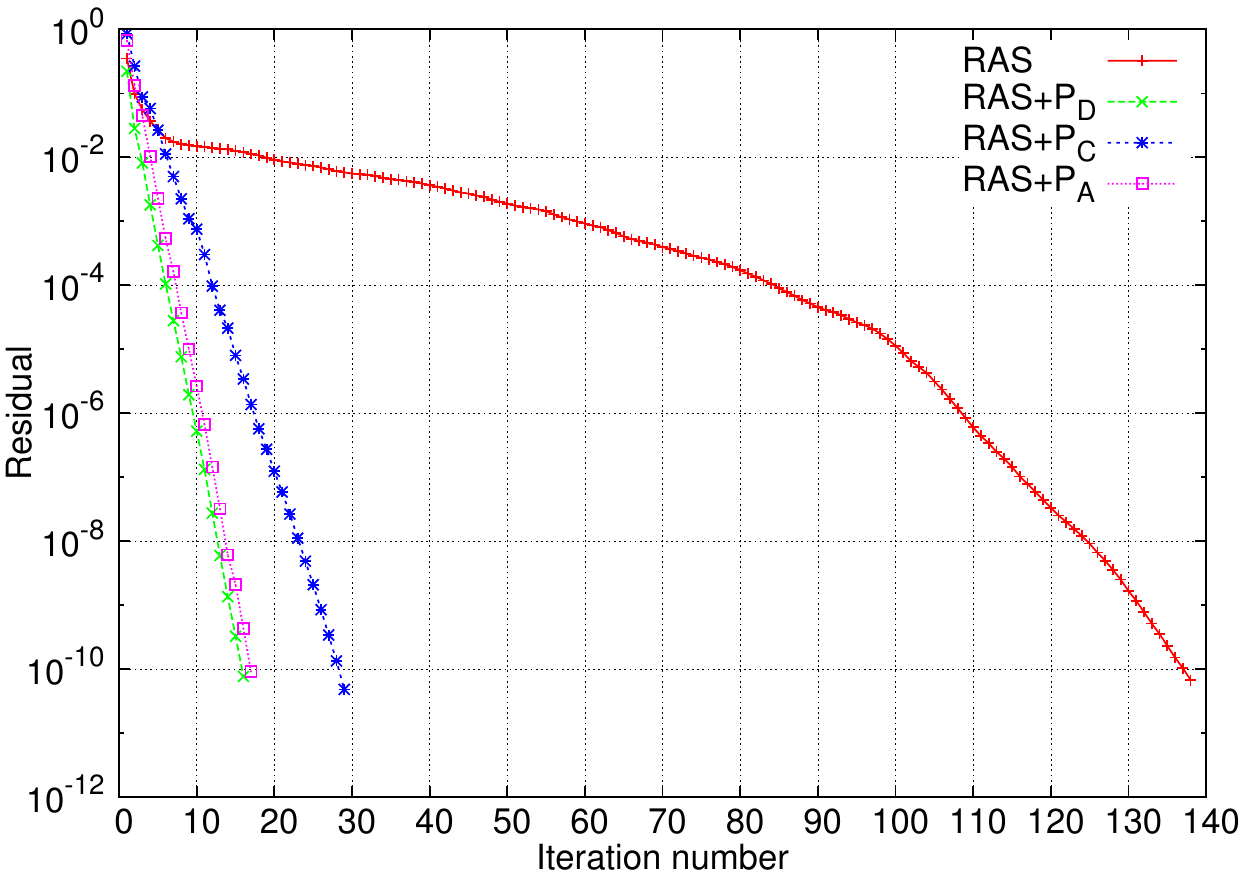}
\caption{Continuous case with 128 subdomains. 15 Ritz vectors spanning the coarse space.}
\label{fig:bvp_c-Ritz20-part128}
\end{minipage}
\end{figure}

In Table \ref{ta:residual}, we report the maximal residual of Ritz pairs for both cases that are extracted from the Krylov subspace when solving the system preconditioned only by RAS.

\begin{table}[htbp!]
\caption{Maximal residual of Ritz pairs for the skyscraper and continuous cases.}
\centering\small
\begin{tabular}{|c|c|c|c|c|}\hline
 Nparts     & 16 & 32 & 64 & 128 \\ \hline
 skyscraper & 2.564719e-09 & 9.162259e-08 & 1.577898e-08 & 4.747077e-09  \\ \hline
 continuous & 4.439121e-03 & 5.242112e-03 & 4.749042e-03 & 1.326165e-03  \\ \hline
\end{tabular}\label{ta:residual}
\end{table}

Note that the projection matrix $E$ is large in the case that the decomposition has 64 or 128 subdomains. As a consequence, computing LU factorization of $E^{-1}$ is costly and impairs the gains in the number of iterations. Hence, we attempt to compute the incomplete LU factorization of $E$, which is cheaper to compute than the LU factorization. Assume $L$ and $U$ are factors of an incomplete LU factorization with no fill-in (ILU(0)) of $E$. Note that $E$ has a sparse structure because of the sparse structure of $Z$. So $L$ and $U$ are sparse as well. This means that it is very cheap to solve $(LU)x=y$. In this way, $E$ is actually replaced by $LU$.

From Figure \ref{fig:bvp_s-InexactE-part64} and Figure \ref{fig:bvp_s-InexactE-part128}, it appears that the perturbation in $E$ has almost no impact on $P_A$ and $P_C$ for the skycraper case, but has severe impact on $P_D$ that leads to a stagnation in the convergence. Figure \ref{fig:bvp_c-InexactE-part64} and Figure \ref{fig:bvp_c-InexactE-part128} show that $P_A$ and $P_C$ are also stable for the continuous case. As is shown in Figure \ref{fig:bvp_c-InexactE-part128}, $P_D$ is still unstable when the perturbation in $E$ is not small enough (see the second column in Table \ref{ta:continuous}). However, Figure \ref{fig:bvp_c-InexactE-part64} shows that $P_D$ is stable, since the perturbation is small enough such that $LU$ is almost same as $E$ (see the first column in Table \ref{ta:continuous}). 

Table \ref{ta:skycraper} and Table \ref{ta:continuous} present the distance between $LU$ and $E$ for both cases, respectively.

\begin{figure}[htbp!]
\begin{minipage}[t]{0.48\linewidth}
\centering
\includegraphics[width=\textwidth]{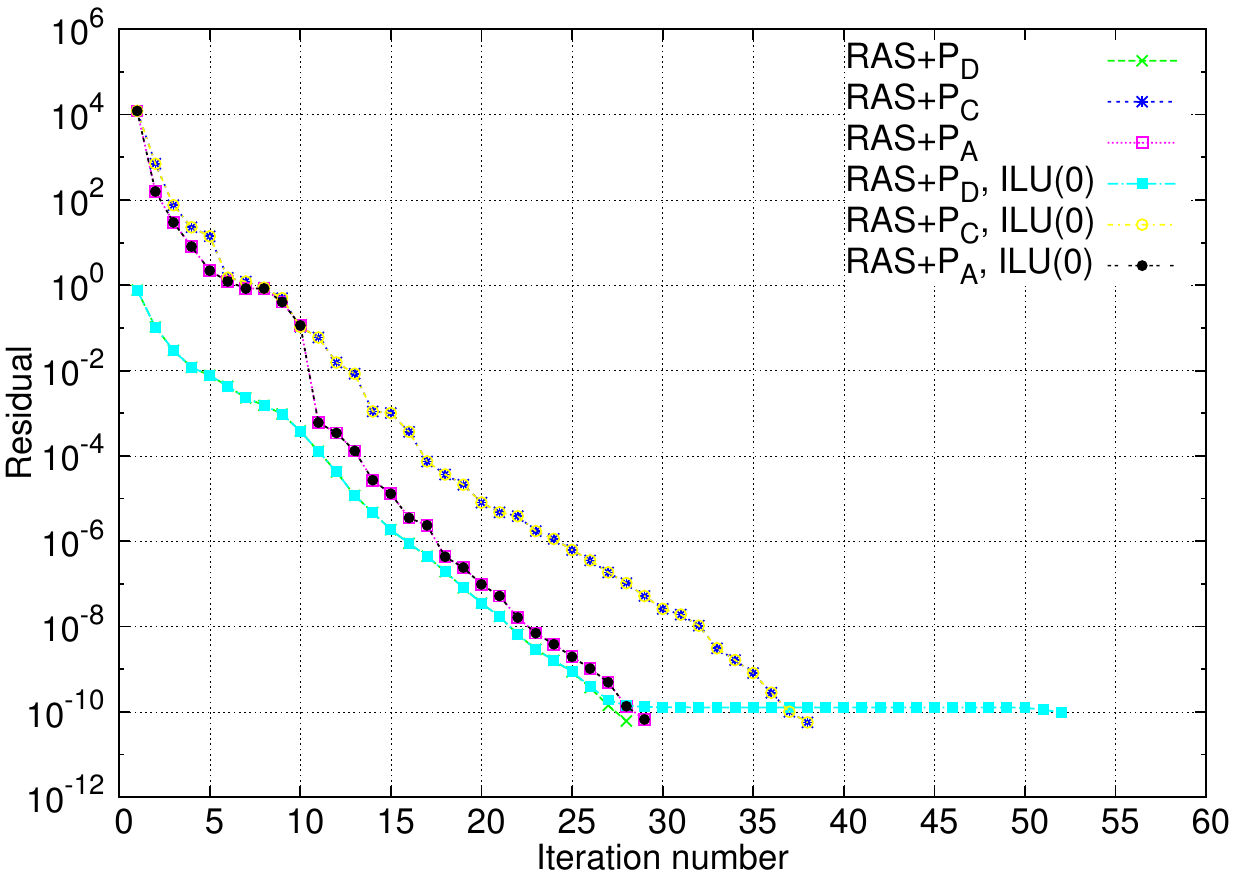}
\caption{Skyscraper case with 64 subdomains. $E$ is replaced by $LU$.}
\label{fig:bvp_s-InexactE-part64}
\end{minipage}
\hfill
\begin{minipage}[t]{0.48\linewidth}
\centering
\includegraphics[width=\textwidth]{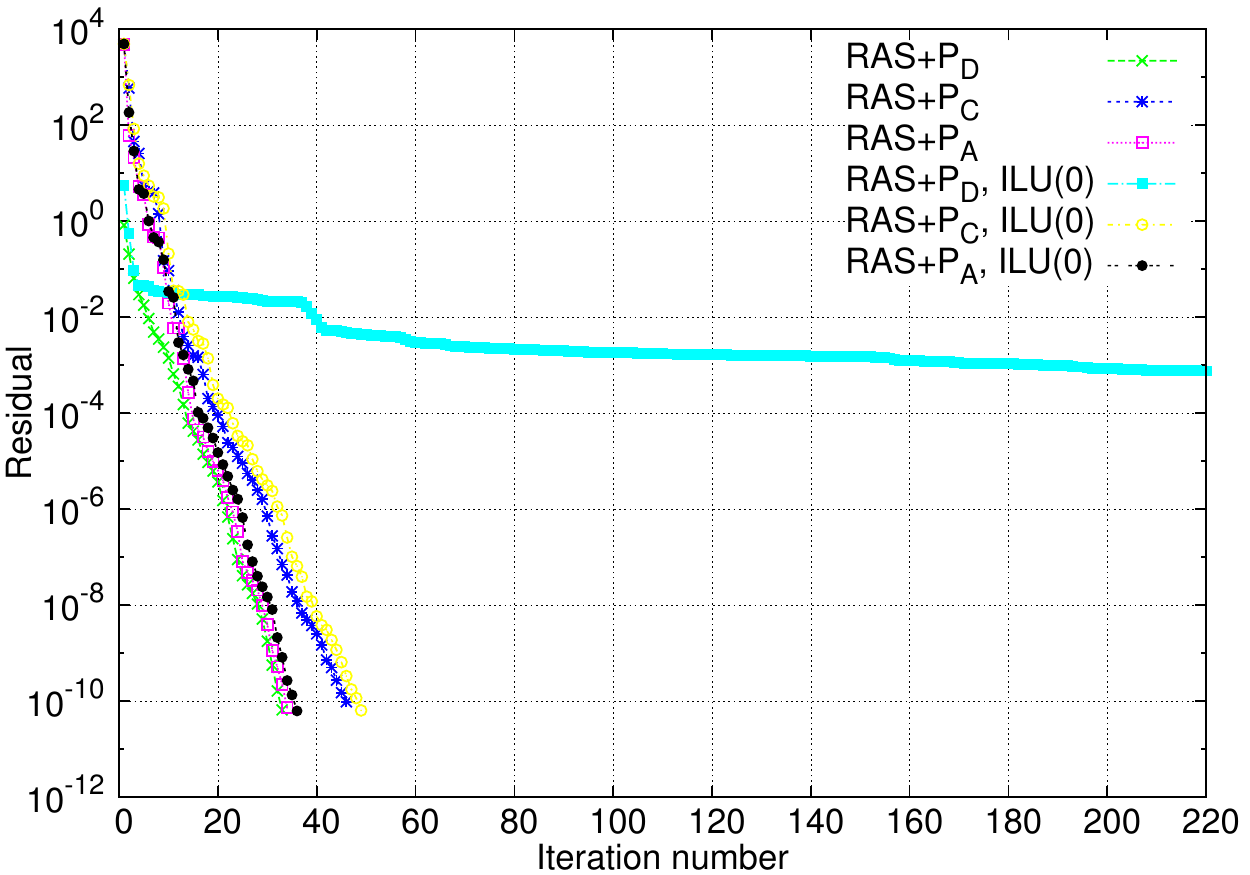}
\caption{Skyscraper case with 128 subdomains. $E$ is replaced by $LU$.}
\label{fig:bvp_s-InexactE-part128}
\end{minipage}
\end{figure}

\begin{table}[htbp!]
\caption{The distance between $LU$ and $E$ for the skyscraper case.}
\centering\small
\begin{tabular}{|c|c|c|}\hline
 Nparts             & 64         & 128 \\ \hline
 $\|E(LU)^{-1}-I\|_2$  & 3.8588e-08 & 8.9712e+02        \\ \hline
 $\|(LU)^{-1}E-I\|_2$  & 4.1433e-10 & 8.6292            \\ \hline
\end{tabular}\label{ta:skycraper}
\end{table}

\begin{figure}[htbp!]
\begin{minipage}[t]{0.48\linewidth}
\centering
\includegraphics[width=\textwidth]{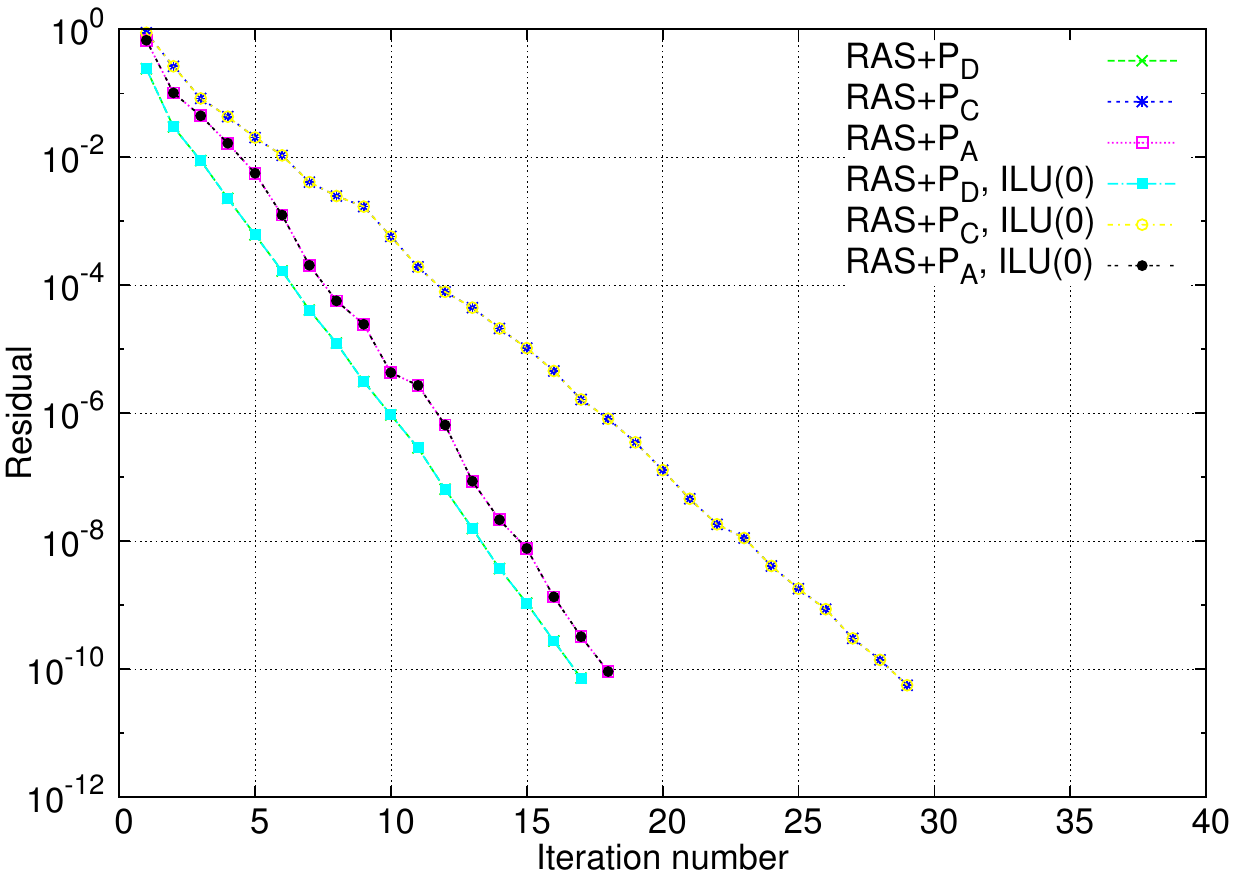}
\caption{Continuous case with 64 subdomains. $E$ is replaced by $LU$.}
\label{fig:bvp_c-InexactE-part64}
\end{minipage}
\hfill
\begin{minipage}[t]{0.48\linewidth}
\centering
\includegraphics[width=\textwidth]{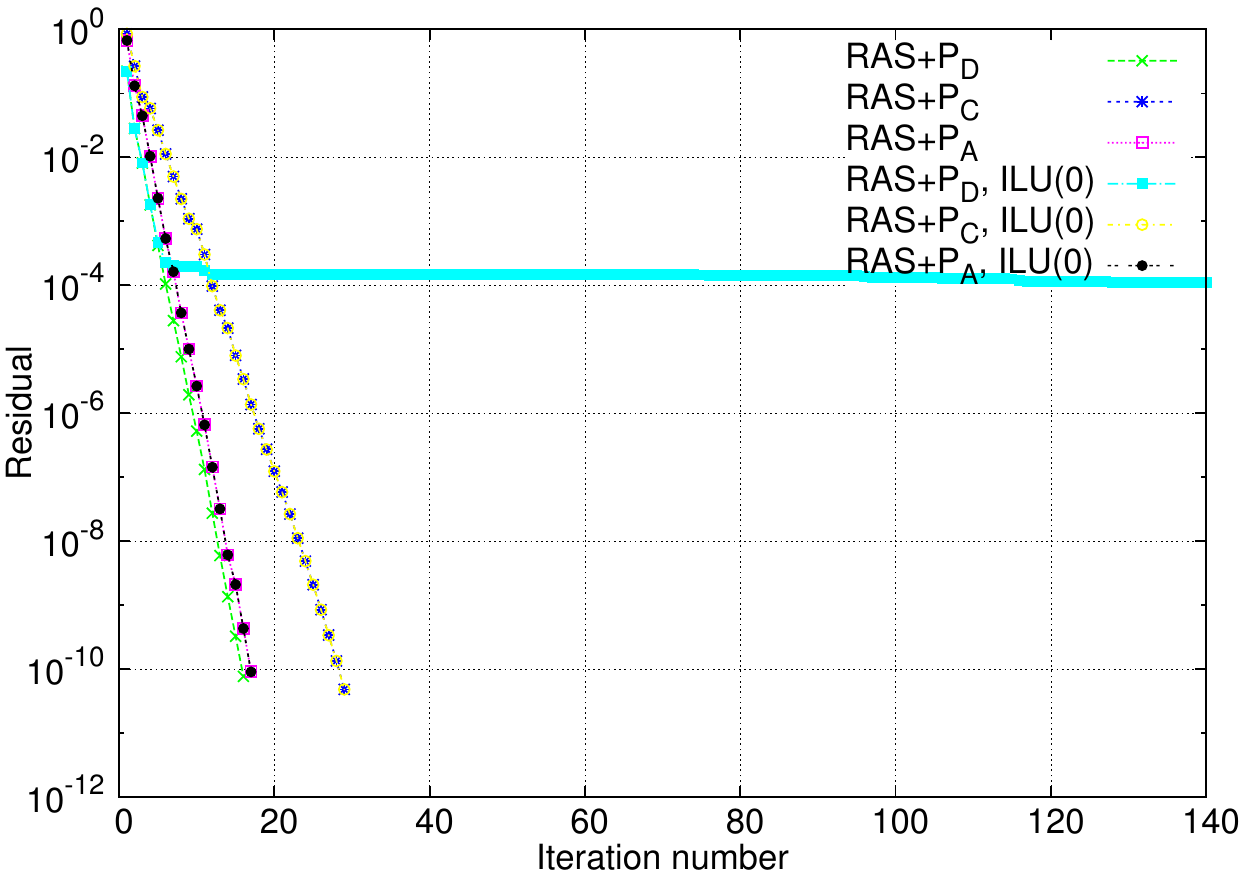}
\caption{Continuous case with 128 subdomains. $E$ is replaced by $LU$.}
\label{fig:bvp_c-InexactE-part128}
\end{minipage}
\end{figure}

\begin{table}[htbp!]
\caption{The distance between $LU$ and $E$ for the continuous case.}
\centering\small
\begin{tabular}{|c|c|c|}\hline
 Nparts             & 64         & 128 \\ \hline
 $\|E(LU)^{-1}-I\|_2$  & 4.1008e-14 & 5.4090e-01        \\ \hline
 $\|(LU)^{-1}E-I\|_2$  & 1.3890e-15 & 1.0215e-01        \\ \hline
\end{tabular}\label{ta:continuous}
\end{table}

\section{Conclusion}
We presented a perturbation analysis on the deflation, coarse correction and adapted deflation preconditioners when the inexact coarse space and inverse of projection matrix are applied for the construction of the preconditioners. Our analysis shows that in exact arithmetic the spectrum of the system preconditioned by the three preconditioners is impacted by the angle between the exact coarse space and the perturbed one. Moreover, we prove that the coarse correction and adapted deflation preconditioners are insensitive to the perturbation of the projection matrix, whereas the deflation preconditioner is sensitive. Numerical results of the different test cases confirm the perturbation analysis. 

{\bf Acknowledgements.}
The author would like to appreciate Professor Xiao-Chuan Cai for his valuable comments that are very helpful to improve the presentation of this paper.


\begin{thebibliography}{10}
\bibitem{CaiSarkins} {\sc X.-C. Cai and M. Sarkis}, {\em A restricted additive Schwarz preconditioner for general sparse linear systems}, SIAM J. Sci. Comput., 21 (1999), pp. 792-797.

\bibitem{ErlanggaSisc} {\sc Y. A. Erlangga and R. Nabben}, {\em Multilevel projection-based nested Krylov iteration for boundary value problems}, SIAM J. Sci. Comput., 30 (2008), pp. 1572-1595.

\bibitem{ErlanggaSimax} {\sc Y. A. Erlangga and R. Nabben}, {\em Deflation and balancing preconditioners for Krylov subspace methods applied to nonsymmetric matrices}, SIAM J. Matrix Anal. Appl., 30 (2008), pp. 684-699.

\bibitem{EEGander} {\sc E. Efstathiou and M. J. Gander}, {\em Why restricted additive Schwarz converges faster than additive Schwarz}, BIT Numerical Mathematics, 43 (2003), pp. 945-959.

\bibitem{Giraud} {\sc L. Giraud and S. Gratton}, {\em On the sensitivity of some spectral preconditioners}, SIAM J. Matrix Anal. Appl., 27 (2006), pp. 1089-1105.

\bibitem{Gene} {\sc G. H. Golub and C. F. Van Loan}, {\em Matrix Computations}, 3rd Edition, John Hopkins University Press, Baltimore, MD, 1996.

\bibitem{Gosselet} {\sc P. Gosselet and C. Rey}, {\em On a selective reuse of Krylov subspaces in Newton-Krylov approaches for nonlinear elasticity}, In Domain decomposition methods in science and engineering, pages 419-426(electronic). Natl. Auton. Univ. Mex., M\'exico, 2003.

\bibitem{NatafMikolajZhao} {\sc P. Hav\'e, R. Masson, F. Nataf, M. Szydlarski, H. Xiang, and T. Zhao}, {\em Algebraic domain decomposition methods for highly heterogeneous problems}, to appear in SIAM J. Sci. Comput., 2013.

\bibitem{Kumar} {\sc G. Karypis and V. Kumar}, {\em A fast and highly qualty multilevel scheme for partitioning irregular graphs}, SIAM J. Sci. Comput., 20 (1998), pp. 359-392.

\bibitem{MorganDeflation} {\sc R. B. Morgan}, {\em GMRES with deflated restarting}, SIAM J. Sci. Comput., 24 (2002), pp. 20-37.

\bibitem{NatafHuaVic} {\sc F. Nataf, H. Xiang, V. Dolean, and N. Spillane}, {\em A coarse grid space construction based on local Dirichlet to Neumann maps}, SIAM J. Sci. Comput., 33 (2011), pp. 1623-1642.

\bibitem{NabbenVuik} {\sc R. Nabben and C. Vuik}, {\em A comparison of deflation and the balancing preconditioner}, SIAM J. Sci. Comput., 27 (2006), pp. 1742-1759.

\bibitem{Padiy} {\sc A. Padiy, O. Axelsson, and B. Polman}, {\em Generalized augmented matrix preconditioning approach and its application to iterative solution of ill-conditioned algebraic systems}, SIAM J. Matrix Anal. Appl., 22 (2000), pp. 793-818.

\bibitem{deSturler} {\sc M. L. Parks, E. de Sturler, G. Mackey, D. D. Johnson, and S. Maiti}, {\em Recycling Krylov subspaces for sequences of linear systems}, SIAM J. Sci. Comput., 28 (2006), pp. 1651-1674.

\bibitem{Hecht} {\sc O. Pironneau, F. Hecht, A. Le Hyric, and J. Morice}, {\em FreeFem++}, Laboratoire J.-L. Lions, Universit\'e Pierre et Marie Curie, available online at http://www.freefem.org/ff++/, 3.7 edition, 2010.

\bibitem{Saad} {\sc Y. Saad}, {\em Iterative Methods for Sparse Linear Systems}, SIAM, 2003.

\bibitem{ST-CYRGanderT} {\sc A. ST-CYR, M. J. Gander, and S. J. Thomas}, {\em Optimized multiplicative, additive, and restricted additive Schwarz preconditioning}, SIAM J. Sci. Comput., 29 (2007), pp. 2402-2425.

\bibitem{MatrixAlgII} {\sc G. W. Stewart}, {\em Matrix Algorithms Volume II: Eigensystems}, SIAM, 2001.

\bibitem{TangNVE} {\sc J. M. Tang, R. Nabben, C. Vuik, and Y. A. Erlangga}, {\em Comparison of two-level preconditioners derived from deflation, domain decomposition and multigrid methods}, J.Sci.Comput., 39 (2009), pp. 340-370.
\end{thebibliography}
\end{document}